\renewcommand{\uppercasenonmath}[1]{}
\numberwithin{equation}{section} \theoremstyle{plain}
\newtheorem*{thm*}{Main Theorem}
\newtheorem{theorem}{Theorem}[section]
\newtheorem{corollary}[theorem]{Corollary}
\newtheorem*{corollary*}{Corollary}
\newtheorem{lemma}[theorem]{Lemma}
\newtheorem*{lemma*}{Lemma}
\newtheorem{proposition}[theorem]{Proposition}
\newtheorem*{proposition*}{Proposition}
\newtheorem{remark}[theorem]{Remark}
\newtheorem*{remark*}{Remark}
\newtheorem{definition}[theorem]{Definition}
\newtheorem*{definition*}{Definition}
\newtheorem*{acknowledgements*}{ACKNOWLEDGEMENTS}
\newcommand{\Ext}{\mbox{\rm Ext}}
\newcommand{\Hom}{\mbox{\rm Hom}}
\newcommand{\Tor}{\mbox{\rm Tor}}
\newcommand{\im}{\mbox{\rm im}}
\newcommand{\pd}{\mbox{\rm pd}}
\begin{document}
\begin{center}
{\large  \bf STABILITY OF GORENSTEIN FLAT CATEGORIES WITH RESPECT TO
A SEMIDUALIZING MODULE \footnote{This work is supported by the
National Natural Science Foundation of China (10971024), the Natural
Science Foundation of Jiangsu Province (BK2010393) and the
Scientific Research Foundation of Graduate School of Jiangsu
province (CXZZ12-0082).}}\\ \vspace{0.8cm} {\small \bf ZHENXING
DI$^{1}$\footnote{Corresponding author. E-mail:
dizhenxing19841111@126.com, liuzk@nwnu.edu.cn, jlchen@seu.edu.cn.},
ZHONGKUI LIU$^{2}$ and JIANLONG CHEN$^{1}$
}\\ \vspace{0.6cm} {\rm $^{1}$Department of Mathematics, Southeast University, Nanjing, 210096, P. R. CHINA\\
$^{2}$College of Mathematics and Information Science, Northwest
Normal University, Lanzhou 730070, P. R. China}
\end{center}

\bigskip
{ \bf  Abstract:}  \leftskip0truemm\rightskip0truemm In this paper,
we first introduce $\mathcal {W}_F$-Gorenstein modules to establish
the following Foxby equivalence:
\begin{center}
$\xymatrix@C=80pt{\mathcal {G}(\mathcal {F})\cap \mathcal {A}_C(R)
\ar@<0.5ex>[r]^{C\otimes_R-} & \mathcal {G}(\mathcal {W}_F)
   \ar@<0.5ex>[l]^{\textrm{Hom}_R(C,-)}} $
\end{center}
where $\mathcal {G}(\mathcal {F})$, $\mathcal {A}_C(R)
   $ and $\mathcal {G}(\mathcal {W}_F)$
denote the class of Gorenstein flat modules, the Auslander class and
the class of $\mathcal {W}_F$-Gorenstein modules respectively. Then,
we investigate two-degree $\mathcal {W}_F$-Gorenstein modules. An
$R$-module $M$ is said to be two-degree $\mathcal {W}_F$-Gorenstein
if there exists an exact sequence
\begin{center}
$\mathbb{G}_\bullet=\indent \cdots\longrightarrow G_1\longrightarrow
G_0\longrightarrow G^0\longrightarrow G^1\longrightarrow\cdots$
\end{center}
in $\mathcal {G}(\mathcal
{W}_F)$ such that $M \cong$ $\im(G_0\rightarrow G^0) $ and that
$\mathbb{G}_\bullet$ is Hom$_R(\mathcal {G}(\mathcal {W}_F),-)$ and
$\mathcal {G}(\mathcal {W}_F)^+\otimes_R-$ exact. We show that two
notions of the two-degree $\mathcal {W}_F$-Gorenstein and the
$\mathcal {W}_F$-Gorenstein modules coincide when R is a commutative
GF-closed ring.
\\{  \textbf{Keywords:}} Semidualizing module; $G_C$-flat module;
$\mathcal {W}_F$-Gorenstein module; Auslander class; Bass class;
Stability of category.
\\\noindent { \textbf{2010 Mathematics Subject Classification:}} 16E05; 16E10; 55U15.
 \bigskip


\section { \bf Introduction}
\indent Throughout this article, $R$ is a commutative ring with
identity and all modules are unitary. We denote by $R$-Mod the
category of $R$-modules. For an $R$-module $M$, the Pontryagin dual
or character module Hom$_\mathbb{Z}(M,\mathbb{Q}/\mathbb{Z})$ is
denoted by $M^+$.

Recall from White [16] that an $R$-module $C$ is said to be
semidualizing if $C$ admits a degreewise finite projective
resolution, the natural homothety map $R\rightarrow$ Hom$_R(C,C)$ is
an isomorphism and Ext$_R^{\geqslant1} (C,C)=$ 0. Examples include
the rank one free module and a dualizing (canonical) module when one
exists. With this notion, the Auslander class and the Bass class
with respect to a semidualizing $R$-module $C$, denoted by $\mathcal
{A}_C(R)$ and $\mathcal {B}_C(R)$ respectively, can be defined and
studied naturally. It is well known that there exists the following
equivalence of categories:
\begin{center}$\xymatrix@C=80pt{\mathcal
{A}_C(R) \ar@<0.5ex>[r]^{C\otimes_R-} & \mathcal {B}_C(R)
   \ar@<0.5ex>[l]^{\textrm{Hom}_R(C,-)}}$
\end{center}

Recently, to be a kind of generalization of the classes of
Gorenstein projective and Gorenstein injective modules, denoted by
$\mathcal {G}(\mathcal {P})$ and $ \mathcal {G}(\mathcal {I}) $
respectively, Geng and Ding [9] introduced the notions of the
$\mathcal {W}_P$-Gorenstein and the $\mathcal {W}_I$-Gorenstein
modules. Moreover, they obtained the following interesting
equivalences of categories:
\begin{center}$\xymatrix@C=80pt{\mathcal {G}(\mathcal {P})\cap \mathcal {A}_C(R) \ar@<0.5ex>[r]^{C\otimes_R-} & \mathcal {G}(\mathcal {W}_P)
   \ar@<0.5ex>[l]^{\textrm{Hom}_R(C,-)} ~~\textrm{and}~~\mathcal {G}(\mathcal {W}_I) \ar@<0.5ex>[r]^{~~~C\otimes_R-} & \mathcal {G}(\mathcal {I})\cap \mathcal {B}_C(R)
   \ar@<0.5ex>[l]^{~~~~\textrm{Hom}_R(C,-)} }$\end{center}
where $\mathcal {G}(\mathcal {W}_P)$ and $\mathcal {G}(\mathcal
{W}_I)$ denote the classes of $\mathcal {W}_P$-Gorenstein and
$\mathcal {W}_I$-Gorenstein modules respectively. So it is naturally
to ask if there exist some other classes satisfying the following
diagram:
\begin{center}$\xymatrix@C=80pt{\mathcal {G}(\mathcal {F})\cap \mathcal {A}_C(R) \ar@<0.5ex>[r]^{C\otimes_R-}
&~~~ \textrm{?}
   \ar@<0.5ex>[l]^{\textrm{Hom}_R(C,-)}}$
\end{center}
The motivation of the present article is the ※$\textrm{?}$§.

We shall introduce, in section 3, the notion of the $\mathcal
{W}_F$-Gorenstein modules, which plays the role of ※$\textrm{?}$§.
Combined with $\mathcal {W}_P$-Gorenstein and $\mathcal
{W}_I$-Gorenstein modules, they can be treated from a similar aspect
as the relationship among projective, injective and flat modules in
classical homological algebra theory. An $R$-module $M$ is said to
be $\mathcal {W}_F$-Gorenstein if there exists an exact sequence
\begin{center}$\mathbb{W}_\bullet =\indent \cdots\longrightarrow W_1\longrightarrow
W_0\longrightarrow W^0\longrightarrow W^1\longrightarrow\cdots$
\end{center}
in $\mathcal {F}_C(R)$ such that $M \cong$
$\im(W_0\rightarrow W^0) $ and that $\mathbb{W}_\bullet$ is
Hom$_R(\mathcal {P}_C(R),-)$ and $ \mathcal {I}_C(R)\otimes_R-$
exact, where $\mathcal {F}_C(R)$, $\mathcal {P}_C(R)$ and $\mathcal
{I}_C(R)$ denote the classes of $C$-flat, $C$-projective and
$C$-injective modules respectively. Furthermore, we get the
following theorem demonstrating the relationship between the classes
$\mathcal {G}(\mathcal {W}_F)$ and $\mathcal {G}\mathcal {F}_C(R)$
(see Theorem 3.4):\vspace{2mm}
\\$\textbf{Theorem~I}$\quad Let $C$ be a semidualizing $R$-module.
Then we have $\mathcal {G}(\mathcal {W}_F)= \mathcal {G}\mathcal
{F}_C(R)\cap \mathcal {B}_C(R)$.\vspace{2mm}\\ Also, the $\mathcal
{G}(\mathcal {W}_F)$-projective dimension for any $R$-module will be
investigated in this section.

In section 4, we first introduce the modules that arise from an
iteration of the above construction. To wit, let $\mathcal
{G}^2(\mathcal {W}_F)$ denote the class of $R$-module $M$ for which
there exists an exact sequence
\begin{center}
$  \mathbb{G}_\bullet =  \cdots\longrightarrow
G_1\longrightarrow G_0\longrightarrow G^0\longrightarrow
G^1\longrightarrow\cdots$
\end{center}
in $\mathcal {G}(\mathcal
{W}_F)$ such that $M \cong$ $\im(G_0\rightarrow G^0) $ and that
$\mathbb{G}_\bullet$ is Hom$_R(\mathcal {G}(\mathcal {W}_F),-)$ and
$\mathcal {G}(\mathcal {W}_F)^+\otimes_R-$ exact. Similarly, we can
also define $R$-modules which belong to $\mathcal {G}^2(\mathcal
{G}\mathcal {F}_C(R)\cap \mathcal {B}_C(R))$ or $\mathcal
{G}^2(\mathcal {F})$. Although the definition defined above differs
from the one appeared in [14], there is still a good correspondence.
We then apply those techniques obtained in the former parts of this
paper to get our results concerning the stability properties of
Gorenstein categories (see Theorem 4.5,
Corollary 4.6 and Corollary 4.7).\vspace{2mm}\\
$\textbf{Theorem~II}$\quad Let $R$ be a GF-closed ring and $C$ be a
semidualizing $R$-module. Then the following hold:

(1) $\mathcal {G}^2(\mathcal {W}_F)=\mathcal {G}(\mathcal {W}_F)$.

(2) $\mathcal {G}^2(\mathcal {G}\mathcal {F}_C(R)\cap \mathcal
{B}_C(R))=\mathcal {G}\mathcal {F}_C(R)\cap \mathcal {B}_C(R)$.

(3) $\mathcal {G}^2(\mathcal {F})=\mathcal {G}(\mathcal {F})$.
\vspace{2mm}

In the following part of this paper, let $C$ be a semidualizing
$R$-module and we mainly recall some necessary notions and
definitions in the next section. \vspace{3mm}


\section { \bf Notions and definitions}
Let $\mathcal{X}=\mathcal{X}(R)$, $\mathcal{Y}=\mathcal{Y}(R) $ be
classes of $R$-modules. We begin with the following definition.

\begin{definition}
We write $\mathcal {X}~\bot~\mathcal {Y}$ if
$\Ext^{\geqslant1}_R(X,Y)=0$ for each object $X\in\mathcal {X} $ and
each object $Y\in\mathcal {Y}$. Write $\mathcal {X}~\top~\mathcal
{Y}$ if $\Tor^R_{\geqslant1}(X,Y)=0$ for each object $X\in\mathcal
{X} $ and each object $Y\in\mathcal {Y}$. For an $R$-module $M$,
write $M~\bot~\mathcal {Y}$ if $\Ext^{\geqslant1}_R(M,Y)=0$ (resp.,
$\mathcal {Y}~\bot~M$ if $\Ext^{\geqslant1}_R(Y,M)=0$) for each
object $Y\in\mathcal {Y}$. Write $\mathcal {Y}~\top~M$ if
$\Tor^R_{\geqslant1}(Y,M)=0$ for each object $Y\in\mathcal {Y}$.
Following [15], we say that $\mathcal {X}$ is a generator for
$\mathcal {Y}$ if $\mathcal {X}\subseteq\mathcal {Y}$ and, for each
object $Y\in\mathcal {Y}$, there exists a short exact sequence
$$0\longrightarrow Y'\longrightarrow X\longrightarrow Y\longrightarrow0$$
in $\mathcal {Y}$ such that $X\in\mathcal {X}$. The class $\mathcal
{X}$ is a projective generator for $\mathcal {Y}$ if $\mathcal {X}$
is a generator for $\mathcal {Y}$ and $\mathcal {X}~\bot~\mathcal
{Y}$.
\end{definition}

\begin{definition}
For any $R$-module $M$, we recall three types of resolutions.

(1) [10, 1.5] A left $\mathcal {X}$-resolution of $M$ is an exact
sequence $\mathbb{X}$ = $ \cdots\rightarrow X_1\rightarrow
X_0\rightarrow M \rightarrow  0 $ with $X_n \in \mathcal {X}$ for
all $n \geqslant 0$.

(2) [10, 1.5] A right $\mathcal {X}$-resolution of $M$ is an exact
sequence $\mathbb{X}$ = $ 0\rightarrow M\rightarrow X^0\rightarrow
X^1 \rightarrow  \cdots $ with $X^n \in \mathcal {X}$ for all
$n\geqslant 0$.

Now let $\mathbb{X}$ be any (left or right) $\mathcal
{X}$-resolution of $M$. We say that $\mathbb{X}$ is co-proper if the
sequence $\Hom_R(\mathbb{X},Y)$ is exact for all $Y \in \mathcal
{X}$.

(3) [16, 1.6] A degreewise finite projective (resp., free)
resolution of $M$ is a left projective (resp., free) resolution
$\mathbb{P}$ of $M$ such that each $P_i$ is finitely generated
projective (resp., free). It is easy to verify that $M$ admits a
degreewise finite projective resolution if and only if $M$ admits a
degreewise finite free resolution.
\end{definition}

\begin{definition}
The $\mathcal {X}$-projective dimension of an $R$-module $M$ is
defined as
\begin{center}$\mathcal {X}\textrm{-}\pd_R(M)= \inf \{ \sup \{n\mid X_n \neq 0\}\mid
\mathbb{X}~is~ a~ left~\mathcal {X}\textrm{-}resolution~ of~
M\}.$\end{center}
Dually, we can also define the $\mathcal
{X}$-injective dimension of $M$.
\end{definition}

The next lemma has a standard proof.

\begin{lemma}
Let $M$ be an $R$-module. Consider the following exact sequence in
$\mathcal {X}$:
\begin{center}$\mathbb{X}=~~ \xymatrix@C=0.5cm{
  \cdots \ar[rr] &&  X_1 \ar[rr]^{\delta^\mathbb{X}_1} && X_0  \ar[rr]^{\delta^\mathbb{X}_0} && X_{-1} \ar[rr]&& \cdots
}$\end{center}
Then we have the following hold:

(1) Assume $ M~\bot~ \mathcal {X}$. If $\mathbb{X}$ is $\Hom_R(M,-)$
exact, then $\Ext^{\geqslant1}_R(M,\im(\delta^\mathbb{X}_i
))=0$ for all $i$. Conversely, if
$\Ext^1_R(M,\im(\delta^\mathbb{X}_i))=0$ for all $i$, then
$\mathbb{X}$ is $\Hom_R(M,-)$ exact.

(2) Assume $M~\top~ \mathcal {X}$. If $\mathbb{X}$ is $M\otimes_R -$
exact, then $\Tor^R_{ \geqslant1}(M,\im(\delta^\mathbb{X}_i
)) = 0$ for all $i$. Conversely, if
$\Tor^R_1(M,\im(\delta^\mathbb{X}_i) ) = 0$ for all $i$, then
$\mathbb{X}$ is $M\otimes_R- $ exact.
\end{lemma}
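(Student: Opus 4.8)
The plan is to prove both directions of Lemma 2.4 simultaneously for the Ext-statement (1), since the Tor-statement (2) follows by the same argument with $\Hom_R(M,-)$ replaced by $M\otimes_R-$ and Ext replaced by Tor, using the hypothesis $M~\top~\mathcal{X}$ in place of $M~\bot~\mathcal{X}$. First I would break the doubly-infinite complex $\mathbb{X}$ into the short exact sequences
\begin{center}
$0\longrightarrow Z_{i+1}\longrightarrow X_i\longrightarrow Z_i\longrightarrow 0,$
\end{center}
where $Z_i=\im(\delta^\mathbb{X}_i)$, noting that exactness of $\mathbb{X}$ is exactly the statement that these are short exact and that $\ker(\delta^\mathbb{X}_{i})$ coincides with the image $Z_{i+1}$. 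Applying the functor $\Hom_R(M,-)$ to each such short exact sequence gives a long exact sequence in $\Ext_R^*(M,-)$ connecting $\Ext^n_R(M,Z_{i+1})$, $\Ext^n_R(M,X_i)$, $\Ext^n_R(M,Z_i)$, and the dimension shifting isomorphism $\Ext^{n}_R(M,Z_i)\cong\Ext^{n+1}_R(M,Z_{i+1})$ for $n\geqslant 1$, which uses $M~\bot~\mathcal{X}$ to kill the terms $\Ext^{n}_R(M,X_i)$ and $\Ext^{n+1}_R(M,X_i)$.

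For the forward implication of (1), assume $\mathbb{X}$ is $\Hom_R(M,-)$ exact. Then for every $i$ the complex $\Hom_R(M,\mathbb{X})$ is exact, and I would read off that the map $\Hom_R(M,X_i)\to\Hom_R(M,Z_i)$ is surjective (because $\Hom_R(M,-)$ of the complex is exact at the relevant spot), hence the connecting map $\Hom_R(M,Z_i)\to\Ext^1_R(M,Z_{i+1})$ is zero; combined with $\Ext^1_R(M,X_i)=0$ this forces $\Ext^1_R(M,Z_{i+1})=0$ for all $i$. Then the dimension-shifting isomorphism $\Ext^n_R(M,Z_{i+1})\cong\Ext^{n-1}_R(M,Z_i)$ bootstraps this up: $\Ext^{\geqslant1}_R(M,Z_i)=0$ for all $i$. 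For the converse, assume $\Ext^1_R(M,Z_i)=0$ for all $i$. Applying $\Hom_R(M,-)$ to $0\to Z_{i+1}\to X_i\to Z_i\to 0$ and using $\Ext^1_R(M,Z_{i+1})=0$ shows $\Hom_R(M,X_i)\twoheadrightarrow\Hom_R(M,Z_i)$; splicing these surjections together with the injections $\Hom_R(M,Z_i)\hookrightarrow\Hom_R(M,X_{i-1})$ shows $\Hom_R(M,\mathbb{X})$ is exact at each spot, i.e. $\mathbb{X}$ is $\Hom_R(M,-)$ exact.

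The only mild subtlety — and the one place I would be careful rather than the genuine obstacle — is bookkeeping at the two-sided nature of $\mathbb{X}$: one must check that the image modules $Z_i$ are defined for all indices $i$ and that the short exact sequences genuinely splice, so that exactness of $\Hom_R(M,\mathbb{X})$ really is equivalent to the conjunction, over all $i$, of surjectivity of $\Hom_R(M,X_i)\to\Hom_R(M,Z_i)$. Since the paper already calls this proof ``standard,'' I expect no hard step; the argument is entirely dimension shifting plus the horseshoe of short exact sequences, and the statement about only needing $\Ext^1$ (rather than all higher Ext) in the converse is exactly what the long exact sequence plus induction on $n$ delivers. I would then remark that part (2) is proved verbatim, replacing the contravariant $\Hom_R(M,-)$ with the covariant right-exact $M\otimes_R-$, the connecting maps in $\Ext$ with those in $\Tor$, and the hypothesis $M~\bot~\mathcal{X}$ with $M~\top~\mathcal{X}$.
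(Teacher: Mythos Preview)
Your proposal is correct and is exactly the standard dimension-shifting argument the paper has in mind; the paper itself omits the proof entirely, stating only that ``the next lemma has a standard proof.'' One trivial slip: $\Hom_R(M,-)$ is covariant (left-exact), not contravariant, but this does not affect your argument.
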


\begin{definition}
An $R$-module $M$ is said to be Gorenstein flat if there exists an
exact sequence
\begin{center}
$\mathbb{X}=  \cdots\longrightarrow F_1\longrightarrow
F_0\longrightarrow  F^0\longrightarrow
F^1\longrightarrow\cdots$\end{center}
in $R$-Mod with each $F_i$ and
$F^i$ flat such that $M\cong$ $\im(F_0\rightarrow  F^0)$ and that
$\mathbb{X}$ is $I\otimes_R-$ exact for any injective $R$-module
$I$. The exact sequence $\mathbb{X}$ is called complete flat
resolution of $M$.
\end{definition}

In the following, we denote the class of Gorenstein flat modules by
$\mathcal{G}(\mathcal {F})$.

\begin{definition}
Let $R$ be a ring. We call $R$ GF-closed if the class of Gorenstein
flat $R$-modules is closed under extensions, that is, if $ 0
\rightarrow X \rightarrow Y \rightarrow Z \rightarrow0$ is an exact
sequence with $X$ and $Z$ Gorenstein flat modules, then $Y$ is also
Gorenstein flat.
\end{definition}

It follows from [2] that the class of GF-closed rings includes
strictly the one of coherent rings and the one of rings of finite
weak global dimension.

\begin{definition}
An $R$-module is $C$-projective (resp., $C$-flat) if it has the form
$C \otimes_R P$ for some projective (resp., flat) $R$-module $P$. An
$R$-module is $C$-injective if it has the form $\Hom_R(C,I)$ for some
injective $R$-module $I$. We set\\
$\indent\indent\indent\indent\indent\indent\indent\indent \mathcal
{P}_C(R) = \{C\otimes_R
P\mid P$ is a projective $R$-module$\}$\\
$\indent\indent\indent\indent\indent\indent\indent\indent\mathcal
{F}_C(R) = \{C\otimes_R F\mid
F$ is a flat $R$-module$\}$\\
$\indent\indent\indent\indent\indent\indent\indent\indent\mathcal
{I}_C(R) = \{$$\Hom_R(C,I)\mid I$ is an injective $R$-module$\}.$
\end{definition}

\begin{remark}
The classes defined above are studied extensively in [12]. From
there we know that

(1) The classes $\mathcal {F}_C(R)$ and $\mathcal {P}_C(R)$ are
closed under arbitrary direct sums and summands and if $R$ is
coherent, then $\mathcal {F}_C(R)$ is also closed under arbitrary
direct products.

(2) The class $\mathcal {I}_C(R)$ is closed under arbitrary direct
products and summands.
\end{remark}

\begin{definition}
An $R$-module $N$ is said to be $G_C$-injective ($G_C$-inj for
short) if there exists an exact sequence
\begin{center}$\indent\mathbb{Y} =
\cdots\longrightarrow \Hom_R(C,I^1)\longrightarrow
\Hom_R(C,I^0)\longrightarrow
 I_0\longrightarrow I_1\longrightarrow\cdots$
 \end{center}
in $R$-Mod with each $I_i$ and $I^i$ injective such that $N\cong$
$\im(\Hom_R(C,I^0)\rightarrow I_0)$ and that $\mathbb{Y}$ is
$\Hom_R(\mathcal {I}_C(R),-)$ exact. The exact sequence $\mathbb{Y}$
is called complete $\mathcal {I}_C\mathcal {I}$-resolution of $N$.

An $R$-module $T$ is said to be $G_C$-flat if there exists an exact
sequence
\begin{center}$\mathbb{Z}= \indent \cdots\longrightarrow
F_1\longrightarrow F_0\longrightarrow C\otimes_R F^0\longrightarrow
C\otimes_R F^1\longrightarrow\cdots$
\end{center}
in $R$-Mod with
each $F_i$ and $F^i$ flat such that $M\cong$ $\im(F_0\rightarrow
C\otimes_R F^0)$ and that $\mathbb{Z}$ is $\mathcal {I}_C(R)\otimes_R-$
exact. The exact sequence $\mathbb{Z}$ is called complete $\mathcal
{F}\mathcal {F}_C$-resolution of $T$.
\end{definition}

We will denote the classes of $G_C$-inj and $G_C$-flat modules by
$\mathcal {G}\mathcal {I}_C(R)$ and $\mathcal {G}\mathcal {F}_C(R)$
respectively.

\begin{remark}
Similar to the proofs in [16] we can easily get that

(1) Every $C$-injective $R$-module is $G_C$-inj and the class
$\mathcal {G}\mathcal {I}_C(R)$ is coresolving and closed under
arbitrary direct products and summands.

(2) Every $C$-flat $R$-module is $G_C$-flat and the class $\mathcal
{G}\mathcal {F}_C(R)$ is closed under arbitrary direct sums.

(3) Every kernel and cokernel of a complete $\mathcal {I}_C\mathcal
{I}$-resolution (resp., $\mathcal {F}\mathcal {F}_C$-resolution)
belongs to $\mathcal {G}\mathcal {I}_C(R)$ (resp., $\mathcal
{G}\mathcal {F}_C(R)$).
\end{remark}

By using the definition of $G_C$-flat modules and Remark 2.10, the
proof of the next lemma is a standard argument.\vspace{3mm}

\begin{lemma}
 The following are equivalent for an $R$-module
$M$:

(1) $M$ is $G_C$-flat.

(2) $M$ satisfies the following two conditions:

$\indent(i)$ $\mathcal {I}_C(R)~\top~M$ and

$\indent(ii)$ There exists an exact sequence $0\rightarrow
M\rightarrow C\otimes_RF^0\rightarrow C\otimes_RF^1\rightarrow
\cdots$ in $R$-Mod with each $F^i$ flat such that $\mathcal
{I}_C(R)\otimes_R-$ leaves it exact.

(3) There exists a short exact sequence $0\rightarrow M\rightarrow
C\otimes_RF\rightarrow G\rightarrow0$ in $R$-Mod with $F$ flat and
$G\in\mathcal {G}\mathcal {F}_C(R)$.
\end{lemma}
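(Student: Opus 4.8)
The plan is to prove the cycle $(1)\Rightarrow(2)\Rightarrow(3)\Rightarrow(1)$ by splicing (co)resolutions and chasing $\Tor$ along short exact sequences, using Lemma 2.4(2) as the bridge between the assertion that a doubly infinite exact complex of flat and $C$-flat modules is $\mathcal{I}_C(R)\otimes_R-$ exact and the assertion that every image $X$ of that complex satisfies $\Tor^R_{\geqslant1}(N,X)=0$ for all $N\in\mathcal{I}_C(R)$. To invoke this bridge one needs $\mathcal{I}_C(R)\top\big(\mathcal{F}(R)\cup\mathcal{F}_C(R)\big)$, which is essentially the only place the semidualizing hypothesis is used: for injective $I$ the module $N=\Hom_R(C,I)$ lies in the Auslander class $\mathcal{A}_C(R)$ (every injective module lies in $\mathcal{B}_C(R)$, and $\Hom_R(C,-)$ carries $\mathcal{B}_C(R)$ into $\mathcal{A}_C(R)$), hence $\Tor^R_{\geqslant1}(N,C)=0$, and since $\Tor^R_i(N,C\otimes_RF)\cong\Tor^R_i(N,C)\otimes_RF$ for flat $F$ this yields $\Tor^R_{\geqslant1}(N,C\otimes_RF)=0$; the vanishing $\Tor^R_{\geqslant1}(N,F')=0$ for flat $F'$ is clear. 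Granting this, $(1)\Rightarrow(2)$ is immediate: if $\mathbb{Z}$ is a complete $\mathcal{F}\mathcal{F}_C$-resolution of $M$, its right half $0\to M\to C\otimes_RF^0\to C\otimes_RF^1\to\cdots$ is the sequence required in (ii), and Lemma 2.4(2) applied to $\mathbb{Z}$ with test module $N$ gives $\Tor^R_{\geqslant1}(N,M)=0$, which is (i).

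For $(2)\Rightarrow(3)$ I would set $G=\Coker\big(M\to C\otimes_RF^0\big)$, so that $0\to M\to C\otimes_RF^0\to G\to0$ is exact and $0\to G\to C\otimes_RF^1\to C\otimes_RF^2\to\cdots$ is an $\mathcal{I}_C(R)\otimes_R-$ exact right $\mathcal{F}_C(R)$-resolution of $G$. An iterated dimension shift along the short exact sequences cut out of the sequence in (ii), using (i) together with the $\mathcal{I}_C(R)\otimes_R-$ exactness of that sequence, gives $\mathcal{I}_C(R)\top G$ and $\mathcal{I}_C(R)\top C_j$ for every cosyzygy $C_j$. Splicing a flat resolution $\cdots\to F_1\to F_0\to G\to0$ onto the left of the above right resolution produces a doubly infinite exact complex with only flat and $C$-flat terms whose images are $G$, the cosyzygies $C_j$, and the syzygies $\Omega^jG$; each such image $X$ satisfies $\Tor^R_{\geqslant1}(N,X)=0$ (for the syzygies because $\Tor^R_1(N,\Omega^jG)\cong\Tor^R_{j+1}(N,G)$ and $\mathcal{I}_C(R)\top G$), so Lemma 2.4(2) shows the complex is $\mathcal{I}_C(R)\otimes_R-$ exact, that is, a complete $\mathcal{F}\mathcal{F}_C$-resolution of $G$. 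Hence $G\in\mathcal{G}\mathcal{F}_C(R)$, and $0\to M\to C\otimes_RF^0\to G\to0$ is exactly the short exact sequence required in (3).

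For $(3)\Rightarrow(1)$: since $G\in\mathcal{G}\mathcal{F}_C(R)$ it has a complete $\mathcal{F}\mathcal{F}_C$-resolution, and by the already-proven $(1)\Rightarrow(2)$ we have $\mathcal{I}_C(R)\top G$; hence $0\to M\to C\otimes_RF\to G\to0$ stays exact after $\mathcal{I}_C(R)\otimes_R-$, and a dimension shift along it (using $\mathcal{I}_C(R)\top G$ and $\mathcal{I}_C(R)\top\mathcal{F}_C(R)$) gives $\mathcal{I}_C(R)\top M$. Splicing a flat resolution of $M$, then the term $C\otimes_RF$, then the right half of the complete resolution of $G$ produces a doubly infinite exact complex of flat and $C$-flat modules whose images are $M$ and its syzygies, the module $G$, and the cokernels of the complete resolution of $G$; each such image $X$ satisfies $\Tor^R_{\geqslant1}(N,X)=0$: for $M$ and its syzygies because $\mathcal{I}_C(R)\top M$, for $G$ because $\mathcal{I}_C(R)\top G$, and for the cokernels because they are $G_C$-flat by Remark 2.10(3), so that $(1)\Rightarrow(2)$ applies to them. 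Lemma 2.4(2) then makes the complex $\mathcal{I}_C(R)\otimes_R-$ exact, so it is a complete $\mathcal{F}\mathcal{F}_C$-resolution of $M$ and $M$ is $G_C$-flat, as required.

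These steps are routine splices and dimension shifts; what needs care is to keep the test module fixed as $N=\Hom_R(C,I)$ and to verify that $N$ has vanishing $\Tor^R_{\geqslant1}$ against every term of a complex before each invocation of Lemma 2.4(2), and to carry out the implications in the order above so that $(1)\Rightarrow(2)$ is already available inside $(3)\Rightarrow(1)$. The single genuinely non-formal ingredient, which I expect to be the main obstacle, is the vanishing $\Tor^R_{\geqslant1}(\Hom_R(C,I),C\otimes_RF)=0$ for $I$ injective and $F$ flat; it rests on the identification of $\Hom_R(C,I)$ as a member of the Auslander class $\mathcal{A}_C(R)$, i.e. on the Foxby equivalence recalled in the introduction.
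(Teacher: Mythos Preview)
Your proposal is correct and is precisely the ``standard argument'' the paper alludes to but does not spell out: the paper's entire proof is the single sentence ``By using the definition of $G_C$-flat modules and Remark 2.10, the proof of the next lemma is a standard argument,'' so your write-up in fact supplies what the paper omits. One small simplification: the vanishing $\Tor^R_{\geqslant1}(\Hom_R(C,I),C\otimes_RF)=0$ that you isolate as the ``main obstacle'' is already recorded in the paper as Lemma~2.14 (combined with $\mathcal{I}_C(R)\subseteq\mathcal{A}_C(R)$ from Lemma~2.13(2)), so you may cite that rather than rederive it via the Foxby equivalence.
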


\begin{definition}
The Auslander class $\mathcal {A}_C(R)$ with respect to $C$ consists
of all $R$-modules $M$ satisfying

(1) $\Tor^R_{\geqslant 1}(C,M)=0=$ $\Ext_R^{\geqslant 1}(C,C\otimes_R
M)$ and

(2) The natural evaluation map $\mu_{CCM}:M\rightarrow
$$\Hom_R(C,C\otimes_R M)$ is an isomorphism.\\
Dually, the Bass class $\mathcal {B}_C(R)$ with respect to $C$
consists of all $R$-modules $N$ satisfying

(1) $\Ext_R^{\geqslant 1}(C,N) =0=$ $\Tor^R_{\geqslant
1}(C,$$\Hom_R(C,N))$ and

(2) The natural evaluation map $\nu_{CCN}
:C\otimes_R$$\Hom_R(C,N)\rightarrow N$ is an isomorphism.\vspace{3mm}
\end{definition}

We now state some basic results about the classes $\mathcal
{A}_C(R)$ and $\mathcal {B}_C(R)$.

\begin{lemma}[12]
The following hold:

(1) If any two $R$-modules in a short exact sequence are in
$\mathcal {A}_C(R)$, respectively $\mathcal {B}_C(R)$, then so is
the third.

(2) The class $\mathcal {A}_C(R)$ contains all modules of finite
flat dimension and those of finite $\mathcal {I}_C$-injective
dimension. The class $\mathcal {B}_C(R)$ contains all modules of
finite injective dimension and those of finite $\mathcal
{F}_C$-projective dimension.
\end{lemma}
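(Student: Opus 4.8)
\emph{Strategy for (1).} Write the short exact sequence as $0\to M'\to M\to M''\to 0$ and run the long exact sequences of $\Tor^R_*(C,-)$ and of $\Ext^*_R(C,-)$, closing with the five lemma; one carries this out in three passes according to which two of $M',M,M''$ are assumed to lie in $\mathcal A_C(R)$. In each pass the order of business is: (i) read off $\Tor^R_{\geqslant 1}(C,-)=0$ for the third module from the $\Tor$ long exact sequence — once this holds the sequence $0\to C\otimes_R M'\to C\otimes_R M\to C\otimes_R M''\to 0$ is again exact; (ii) apply $\Hom_R(C,-)$ to this new short exact sequence and read off $\Ext^{\geqslant 1}_R(C,C\otimes_R-)=0$ for the third module from the $\Ext$ long exact sequence, which simultaneously yields exactness of $0\to\Hom_R(C,C\otimes_R M')\to\Hom_R(C,C\otimes_R M)\to\Hom_R(C,C\otimes_R M'')\to 0$; (iii) put the maps $\mu_{CC(-)}$ as the vertical arrows between these two short exact sequences and invoke the five lemma to get the remaining $\mu$ an isomorphism. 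For $\mathcal B_C(R)$ one runs the dual argument: apply $\Hom_R(C,-)$ first (using $\Ext^{\geqslant 1}_R(C,-)=0$ to keep $0\to\Hom_R(C,N')\to\Hom_R(C,N)\to\Hom_R(C,N'')\to 0$ exact), then $C\otimes_R-$, then the five lemma with the maps $\nu_{CC(-)}$. Lemma 1.4 is the bookkeeping device that makes the exactness-preservation in steps (i) and (ii) clean.

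Two of the three passes are routine. The ``extension'' pass ($M',M''\in\mathcal A_C(R)\Rightarrow M\in\mathcal A_C(R)$) and the ``kernel of an epimorphism'' pass ($M,M''\in\mathcal A_C(R)\Rightarrow M'\in\mathcal A_C(R)$) go through verbatim, because in step (i) the relevant groups $\Tor^R_i(C,M)$, respectively $\Tor^R_i(C,M')$, are squeezed in the long exact sequence between two groups that already vanish for all $i\geqslant 1$, so their vanishing is immediate; steps (ii) and (iii) then succeed because the connecting maps that intervene have zero source or zero target (in the kernel-of-epimorphism pass one uses that, via the isomorphisms $\mu_M$ and $\mu_{M''}$, the map $\Hom_R(C,C\otimes_R M)\to\Hom_R(C,C\otimes_R M'')$ is identified with the epimorphism $M\to M''$). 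Dually, for $\mathcal B_C(R)$ the ``extension'' and ``cokernel of a monomorphism'' passes are the routine ones.

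\emph{The main obstacle.} The remaining pass is where the real content lies. For $\mathcal A_C(R)$ it is the ``cokernel of a monomorphism'' case, $M,M'\in\mathcal A_C(R)\Rightarrow M''\in\mathcal A_C(R)$: step (i) now demands $\Tor^R_1(C,M'')=0$, equivalently that $C\otimes_R M'\to C\otimes_R M$ be injective, and this is not formal — applying $\Hom_R(C,-)$ to that map gives back the injection $M'\hookrightarrow M$, which only yields $\Hom_R(C,\Tor^R_1(C,M''))=0$, not $\Tor^R_1(C,M'')=0$. (Dually, the hard case for $\mathcal B_C(R)$ is ``kernel of an epimorphism'', where one must show $\Hom_R(C,N)\twoheadrightarrow\Hom_R(C,N'')$.) My plan here is a pullback reduction: pick a free module $F$ with $F\twoheadrightarrow M''$, let $0\to K\to F\to M''\to 0$, and form the pullback $P=M\times_{M''}F$, so that $0\to M'\to P\to F\to 0$ and $0\to K\to P\to M\to 0$ are exact. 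The first sequence puts $P$ in $\mathcal A_C(R)$ by the already-proved extension pass (using that $F$, being free, lies in $\mathcal A_C(R)$ — established at the start of the proof of (2) below), and then the second sequence puts $K$ in $\mathcal A_C(R)$ by the already-proved kernel-of-epimorphism pass. This reduces matters to the case in which the middle term $F$ is projective; and it is precisely on this reduced case — showing $\Tor^R_1(C,M'')=0$ for a presentation $0\to K\to F\to M''\to 0$ with $K\in\mathcal A_C(R)$ and $F$ free — that one must go beyond diagram chasing, exploiting the finite presentation of $C$ together with purity properties of the syzygies appearing in a flat resolution (this is where [12] does genuine work). I expect this to be the hard point of the whole lemma; the $\mathcal B_C(R)$ analogue is handled by the dual pushout reduction along an injective copresentation.

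\emph{On (2).} The route is to verify the generators directly and then bootstrap. A flat module $M$ is a filtered colimit of finitely generated free modules; since $C$ admits a degreewise finite projective resolution, the functors $\Hom_R(C,-)$ and $\Ext^{\geqslant 1}_R(C,-)$ commute with this colimit, so one gets $\Tor^R_{\geqslant 1}(C,M)=0$, $\Ext^{\geqslant 1}_R(C,C\otimes_R M)=0$ and $\mu_{CCM}$ an isomorphism (it is the colimit of identity maps $\mu_{CCR^{n}}$), i.e.\ $M\in\mathcal A_C(R)$; dually, a $C$-injective module $\Hom_R(C,I)$ lies in $\mathcal B_C(R)$. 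One then climbs a finite flat (resp.\ finite $\mathcal I_C$-injective) resolution using part (1) to reach every module of finite flat (resp.\ finite $\mathcal I_C$-injective) dimension; each induction step is again a ``cokernel of monomorphism'' (resp.\ ``kernel of epimorphism'') instance, but now the relevant syzygy is flat, resp.\ $C$-injective, which is exactly what supplies the degree-one vanishing flagged above. The assertions about modules of finite injective dimension and of finite $\mathcal F_C$-projective dimension follow by the dual bootstrapping.
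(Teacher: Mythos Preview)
The paper does not prove this lemma: the bracketed ``[12]'' in the heading signals that the statement is quoted from Holm--White, \emph{Foxby equivalence over associative rings}, and no argument is given in the present article. There is therefore no in-paper proof against which to compare your proposal.

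That said, your outline is accurate in its diagnosis. The extension and kernel-of-epimorphism passes for $\mathcal A_C(R)$ (and their $\mathcal B_C(R)$ duals) are indeed routine long-exact-sequence and five-lemma bookkeeping, and you have correctly located the genuine content: in the cokernel-of-monomorphism pass one must show $\Tor_1^R(C,M'')=0$, and applying $\Hom_R(C,-)$ to $C\otimes_R M'\to C\otimes_R M$ only returns $\Hom_R\bigl(C,\Tor_1^R(C,M'')\bigr)=0$, which is not enough since $C$ need not be a generator. Where your proposal falls short is in \emph{resolving} this. The pullback reduction to a free middle term is valid but leaves the obstruction intact---after reducing to $0\to K\to F\to M''\to 0$ with $F$ free and $K\in\mathcal A_C(R)$ you still need $C\otimes_R K\hookrightarrow C\otimes_R F$, which is the same problem---and at that point you simply defer to ``[12] does genuine work'' without saying what the mechanism is. The same gap recurs in your treatment of part~(2): climbing a finite flat resolution invokes the cokernel pass at every stage, and your assertion that ``the relevant syzygy is flat\ldots which is exactly what supplies the degree-one vanishing'' is not substantiated, since flatness of $F_n$ and $F_{n-1}$ alone does not force injectivity of $C\otimes_R F_n\to C\otimes_R F_{n-1}$ (equivalently, it does not force $\Tor_1^R(C,K_{n-1})=0$). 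So the proposal is best read as a correct roadmap with the key step outsourced---which, in fairness, is exactly what the paper itself does by citing~[12].
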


To be a direct corollary of [12, Theorem 6.4], we have the following
lemma:

\begin{lemma}
$\mathcal {P}_C(R)~\bot~\mathcal {B}_C(R)$, $\mathcal
{A}_C(R)~\bot~\mathcal {I}_C(R)$ and $\mathcal
{A}_C(R)~\top~\mathcal {F}_C(R)$.
\end{lemma}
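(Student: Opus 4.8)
The plan is to reduce each of the three orthogonality relations to the defining (co)homology vanishing built into the Auslander and Bass classes (Definition 2.13), using tensor--hom adjunction together with a well-chosen (co)resolution; this is precisely the mechanism behind [12, Theorem 6.4], and all three assertions come out by essentially the same short computation.

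For $\mathcal{A}_C(R)\top\mathcal{F}_C(R)$ I would fix $M\in\mathcal{A}_C(R)$, a flat $R$-module $F$, and a projective resolution $P_\bullet\to M$. Since $F$ is flat, tensoring $P_\bullet$ with $C\otimes_RF$ and pulling $F$ out of the homology gives $\Tor^R_i(M,C\otimes_RF)\cong\Tor^R_i(M,C)\otimes_RF$, which is $0$ for $i\geqslant1$ because $\Tor^R_{\geqslant1}(C,M)=0$. For $\mathcal{A}_C(R)\bot\mathcal{I}_C(R)$, with $M\in\mathcal{A}_C(R)$, an injective $I$, and the same resolution $P_\bullet\to M$, I would apply the adjunction isomorphism $\Hom_R(P_\bullet,\Hom_R(C,I))\cong\Hom_R(P_\bullet\otimes_RC,I)$ and use that $\Hom_R(-,I)$ is exact to identify $\Ext^i_R(M,\Hom_R(C,I))$ with $\Hom_R(\Tor^R_i(M,C),I)$, again $0$ for $i\geqslant1$.

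The case $\mathcal{P}_C(R)\bot\mathcal{B}_C(R)$ is the one where the choice of resolution changes: here I would start from a projective resolution $Q_\bullet\to C$, observe that $Q_\bullet\otimes_RP\to C\otimes_RP$ is still a projective resolution since $P$ is flat (so $\Tor^R_{\geqslant1}(C,P)=0$), and then rewrite $\Ext^i_R(C\otimes_RP,N)$ via adjunction and the exactness of $\Hom_R(P,-)$ as $\Hom_R(P,\Ext^i_R(C,N))$, which vanishes for $i\geqslant1$ because $N\in\mathcal{B}_C(R)$ forces $\Ext^{\geqslant1}_R(C,N)=0$. I do not anticipate a genuine obstacle; the only thing to be careful about is pairing each relation with the correct resolution (of $M$ in the first two, of $C$ in the last) and invoking the appropriate exactness --- flatness of $F$ or $P$, injectivity of $I$ --- to move (co)homology through the relevant functor.
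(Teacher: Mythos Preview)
Your argument is correct: each of the three vanishing statements reduces, via tensor--hom adjunction and the exactness of $-\otimes_RF$, $\Hom_R(-,I)$, or $\Hom_R(P,-)$, to the (co)homology vanishing built into Definitions~2.12. The paper does not give its own proof here but simply records the lemma as a direct corollary of [12, Theorem~6.4]; your write-up unpacks exactly the standard adjunction computations behind that citation, so the approaches coincide.
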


\section { \bf $\mathcal {W}_F$-Gorenstein modules}

Now we introduce and investigate the notion of $\mathcal
{W}_F$-Gorenstein modules and the corresponding projective dimension
for any $R$-Module.

\begin{definition}
An $R$-module $M$ is said to be $\mathcal {W}_F$-Gorenstein if there
exists an exact sequence
\begin{center}
$\mathbb{W}_\bullet =  \cdots\longrightarrow
W_1\longrightarrow W_0\longrightarrow W^0\longrightarrow
W^1\longrightarrow\cdots$
\end{center}
in $\mathcal {F}_C(R)$ such
that $M \cong$ $\im(W_0\rightarrow W^0) $ and that $\mathbb{W}_\bullet$ is
$\Hom_R(\mathcal {P}_C(R),-)$ and $ \mathcal {I}_C(R)\otimes_R-$
exact.
\end{definition}

It is clear that each module in $\mathcal {F}_C(R)$ is $\mathcal
{W}_F$-Gorenstein, and every kernel and cokernel of
$\mathbb{W}_\bullet$ is $\mathcal {W}_F$-Gorenstein.

In the following, we denote by $\mathcal {G}(\mathcal {W}_F)$ the
class of $\mathcal {W}_F$-Gorenstein modules.

\begin{proposition}
$\mathcal {P}_C(R)~\bot~\mathcal {G}(\mathcal {W}_F)$ and $\mathcal
{I}_C(R)~\top~\mathcal {G}(\mathcal {W}_F)$.
\end{proposition}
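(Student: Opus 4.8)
The plan is to prove the two orthogonality statements $\mathcal{P}_C(R)\ \bot\ \mathcal{G}(\mathcal{W}_F)$ and $\mathcal{I}_C(R)\ \top\ \mathcal{G}(\mathcal{W}_F)$ by using the defining complete resolution $\mathbb{W}_\bullet$ of a $\mathcal{W}_F$-Gorenstein module together with Lemma 2.4. Fix $M\in\mathcal{G}(\mathcal{W}_F)$ with its exact sequence $\mathbb{W}_\bullet$ in $\mathcal{F}_C(R)$, and note that $M$ is realized as a kernel in $\mathbb{W}_\bullet$. Since the shifts of $\mathbb{W}_\bullet$ are again complete resolutions whose images are $\mathcal{W}_F$-Gorenstein, it suffices to prove $\Ext_R^{\geqslant 1}(Q,M)=0$ for $Q\in\mathcal{P}_C(R)$ and $\Tor^R_{\geqslant 1}(J,M)=0$ for $J\in\mathcal{I}_C(R)$, and then the vanishing for all images follows simultaneously.

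For the $\Ext$-statement: write $Q=C\otimes_R P$ with $P$ projective. First I would check $\mathcal{P}_C(R)\ \bot\ \mathcal{F}_C(R)$, i.e. $\Ext_R^{\geqslant 1}(C\otimes_R P, C\otimes_R F)=0$; this is immediate because $C\otimes_R F\in\mathcal{B}_C(R)$ (a $C$-flat module has finite $\mathcal{F}_C$-projective dimension, namely zero) and Lemma 2.16 gives $\mathcal{P}_C(R)\ \bot\ \mathcal{B}_C(R)$. So every term $W_i$, $W^i$ of $\mathbb{W}_\bullet$ lies in the right $\Ext$-orthogonal of $\mathcal{P}_C(R)$, i.e. $Q\ \bot\ \mathcal{F}_C(R)$ in the notation of Lemma 2.4. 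Since by definition $\mathbb{W}_\bullet$ is $\Hom_R(\mathcal{P}_C(R),-)$-exact, Lemma 2.4(1) (applied with $\mathcal{X}=\mathcal{F}_C(R)$ and $M$ replaced by $Q$) yields $\Ext_R^{\geqslant 1}(Q,\im(\delta^{\mathbb{W}}_i))=0$ for all $i$; in particular taking the image that equals $M$ gives $\Ext_R^{\geqslant 1}(Q,M)=0$, which is $\mathcal{P}_C(R)\ \bot\ \mathcal{G}(\mathcal{W}_F)$.

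For the $\Tor$-statement the argument is dual: write $J=\Hom_R(C,I)$ with $I$ injective. I would verify $\mathcal{I}_C(R)\ \top\ \mathcal{F}_C(R)$, i.e. $\Tor^R_{\geqslant 1}(\Hom_R(C,I), C\otimes_R F)=0$. This should follow from Lemma 2.16, since $C\otimes_R F\in\mathcal{A}_C(R)$ (modules of finite flat dimension lie in the Auslander class by Lemma 2.15(2), and $C\otimes_R F$ is $C$-flat hence the relevant $\Tor$ and $\Ext$ vanish), combined with $\mathcal{A}_C(R)\ \top\ \mathcal{F}_C(R)$; alternatively one uses that $\Hom_R(C,I)$ has finite $\mathcal{I}_C$-injective dimension so lies in $\mathcal{A}_C(R)$ and again invokes $\mathcal{A}_C(R)\ \top\ \mathcal{F}_C(R)$. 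Either way every $W_i$, $W^i$ satisfies $J\ \top\ W_i$, so $J\ \top\ \mathcal{F}_C(R)$ holds in the sense of Lemma 2.4. Since $\mathbb{W}_\bullet$ is $\mathcal{I}_C(R)\otimes_R-$ exact, Lemma 2.4(2) gives $\Tor^R_{\geqslant 1}(J,\im(\delta^{\mathbb{W}}_i))=0$ for all $i$, hence $\Tor^R_{\geqslant 1}(J,M)=0$, which is $\mathcal{I}_C(R)\ \top\ \mathcal{G}(\mathcal{W}_F)$.

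The only genuinely delicate point is confirming the two auxiliary facts $\mathcal{P}_C(R)\ \bot\ \mathcal{F}_C(R)$ and $\mathcal{I}_C(R)\ \top\ \mathcal{F}_C(R)$ via the class memberships $\mathcal{F}_C(R)\subseteq\mathcal{B}_C(R)$ and $\mathcal{F}_C(R)\subseteq\mathcal{A}_C(R)$; these are standard consequences of Lemma 2.15(2) once one observes that a $C$-flat module has $\mathcal{F}_C$-projective dimension $0$ and flat dimension handled by the Auslander-class side, but it is worth spelling out the inclusion $\mathcal{F}_C(R)\subseteq\mathcal{A}_C(R)$ carefully (it really uses that $C\otimes_R F$, being $C$-flat, already has all the required vanishing since $F$ is flat and $C$ is semidualizing, so $\Tor^R_{\geqslant 1}(C, F)=0$ and the evaluation map is an isomorphism). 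Everything else is a direct bookkeeping application of Lemma 2.4 to the two exactness hypotheses built into the definition of $\mathcal{G}(\mathcal{W}_F)$.
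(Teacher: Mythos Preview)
Your approach is exactly the paper's: apply Lemma~2.4 to the complete $\mathcal{F}_C(R)$-resolution $\mathbb{W}_\bullet$, once the termwise orthogonalities $\mathcal{P}_C(R)\,\bot\,\mathcal{F}_C(R)$ and $\mathcal{I}_C(R)\,\top\,\mathcal{F}_C(R)$ are in hand (these come from Lemma~2.14 via the containments in Lemma~2.13; your ``Lemma~2.15/2.16'' are the paper's Lemma~2.13/2.14).

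There is, however, one genuine slip to fix. You repeatedly assert $\mathcal{F}_C(R)\subseteq\mathcal{A}_C(R)$ and try to justify it by saying $C\otimes_R F$ ``has finite flat dimension'' or that the Auslander-class axioms hold because $F$ is flat. This is not correct in general: $C\otimes_R F$ has $\mathcal{F}_C$-projective dimension~$0$, so by Lemma~2.13(2) it lies in $\mathcal{B}_C(R)$, not in $\mathcal{A}_C(R)$. (Already $C$ itself, the case $F=R$, need not lie in $\mathcal{A}_C(R)$; the condition $\Tor_{\geqslant 1}^R(C,C\otimes_R F)=0$ is not forced by $C$ being semidualizing.) The argument you label ``alternatively'' is the right one and is what the paper's citation of Lemma~2.14 actually uses: $\Hom_R(C,I)\in\mathcal{I}_C(R)$ has $\mathcal{I}_C$-injective dimension~$0$, hence lies in $\mathcal{A}_C(R)$ by Lemma~2.13(2), and then $\mathcal{A}_C(R)\,\top\,\mathcal{F}_C(R)$ from Lemma~2.14 gives $\mathcal{I}_C(R)\,\top\,\mathcal{F}_C(R)$. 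Drop the $\mathcal{F}_C(R)\subseteq\mathcal{A}_C(R)$ claim and keep that alternative; then your proof is complete and coincides with the paper's.
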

\begin{proof}
It follows directly from Lemma 2.4 and Lemma 2.14.
\end{proof}

\begin{proposition}
Let $ \mathbb{W}_\bullet=  \cdots\rightarrow W_1\rightarrow
W_0\rightarrow W^0\rightarrow W^1\rightarrow\cdots$ be an exact
sequence in $\mathcal {F}_C(R)$ and $M \cong$ $\im(W_0\rightarrow
W^0) $. Then $\mathbb{W}_\bullet $ is $\Hom_R(\mathcal {P}_C(R),-)$
exact if and only if $M\in \mathcal {B}_C(R)$.
\end{proposition}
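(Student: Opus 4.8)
The plan is to prove each implication separately, using the behavior of the functors $C\otimes_R-$ and $\Hom_R(C,-)$ under the Auslander--Bass adjunction, together with the orthogonality relations recorded in Lemmas 2.13--2.15.

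First I would handle the ``if'' direction. Suppose $M\in\mathcal{B}_C(R)$. The sequence $\mathbb{W}_\bullet$ lives in $\mathcal{F}_C(R)$; writing $W_i=C\otimes_R F_i$ and $W^i=C\otimes_R F^i$ for flat modules $F_i,F^i$, one sees $\mathbb{W}_\bullet=C\otimes_R\mathbb{F}_\bullet$ for an exact sequence $\mathbb{F}_\bullet$ of flat modules (exactness is preserved because each syzygy of $\mathbb{W}_\bullet$, being a kernel/cokernel in a complex over $\mathcal{F}_C(R)$, lies in $\mathcal{B}_C(R)$, hence $\Tor_{\geqslant1}^R(C,-)$ vanishes on it by Lemma 2.13(1)). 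Now for a $C$-projective module $P_C=C\otimes_R Q$ with $Q$ projective, Hom-tensor adjunction plus the Ext-vanishing in the definition of $\mathcal{B}_C(R)$ gives a natural isomorphism $\Hom_R(C\otimes_R Q,\,C\otimes_R F)\cong\Hom_R(Q,\Hom_R(C,C\otimes_R F))\cong\Hom_R(Q,F)$ for $F$ flat, and this extends to an isomorphism of complexes $\Hom_R(\mathcal{P}_C(R),\mathbb{W}_\bullet)\cong\Hom_R(Q,\mathbb{F}_\bullet)$. Since $\mathbb{F}_\bullet$ is an exact complex of flats and $Q$ is projective, $\Hom_R(Q,\mathbb{F}_\bullet)$ is exact; therefore $\Hom_R(\mathcal{P}_C(R),\mathbb{W}_\bullet)$ is exact, which is what ``$\mathbb{W}_\bullet$ is $\Hom_R(\mathcal{P}_C(R),-)$ exact'' means. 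One must be slightly careful that the adjunction isomorphisms are natural in the complex variable so they assemble into a chain isomorphism, but this is routine.

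For the ``only if'' direction, assume $\mathbb{W}_\bullet$ is $\Hom_R(\mathcal{P}_C(R),-)$ exact; I want to conclude $M\in\mathcal{B}_C(R)$. The key point is that $M$ sits inside a co-proper right $\mathcal{F}_C(R)$-resolution, namely $0\to M\to W^0\to W^1\to\cdots$, and also has a left $\mathcal{F}_C(R)$-resolution $\cdots\to W_1\to W_0\to M\to 0$; splicing, all syzygies are of the same kind. By Lemma 2.13(2) every module in $\mathcal{F}_C(R)$ lies in $\mathcal{B}_C(R)$ (finite $\mathcal{F}_C$-projective dimension), and by Lemma 2.13(1) $\mathcal{B}_C(R)$ is closed under the two-out-of-three property in short exact sequences; hence it suffices to show the relevant Ext- and Tor-vanishing and the bijectivity of $\nu_{CCM}$ can be propagated along $\mathbb{W}_\bullet$. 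Concretely: apply $\Hom_R(C,-)$ to $\mathbb{W}_\bullet$. Each $W^i,W_i$ is $C$-flat, so $\Hom_R(C,C\otimes_R F)\cong F$ is flat and $\Ext^{\geqslant1}_R(C,W^i)=\Ext^{\geqslant1}_R(C,W_i)=0$; moreover the hypothesis that $\mathbb{W}_\bullet$ is $\Hom_R(\mathcal{P}_C(R),-)$ exact translates, again by adjunction (taking $Q=R$, i.e.\ the $C$-projective module $C$ itself), into exactness of $\Hom_R(C,\mathbb{W}_\bullet)$. Thus $\Hom_R(C,\mathbb{W}_\bullet)$ is an exact complex of flat modules with $\Hom_R(C,M)$ as its image at the splicing spot, and applying $C\otimes_R-$ to it recovers $\mathbb{W}_\bullet$ (since $\nu_{CCW}$ is an isomorphism for $C$-flat $W$, as these lie in $\mathcal{B}_C(R)$). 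A diagram chase then shows $\nu_{CCM}$ is an isomorphism, $\Tor^R_{\geqslant1}(C,\Hom_R(C,M))=0$ (dimension shift along the flat resolution $\Hom_R(C,\mathbb{W}_\bullet)$), and $\Ext^{\geqslant1}_R(C,M)=0$ (dimension shift along the right part of $\mathbb{W}_\bullet$, whose terms are $\Ext$-acyclic for $C$). Hence $M\in\mathcal{B}_C(R)$.

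The main obstacle I anticipate is the ``only if'' direction's bookkeeping: one must verify that the exactness of $\Hom_R(\mathcal{P}_C(R),\mathbb{W}_\bullet)$ really does force exactness of $\Hom_R(C,\mathbb{W}_\bullet)$ (not merely of $\Hom_R(C\otimes_R Q,\mathbb{W}_\bullet)$ for projective $Q$) — this is the place where naturality of the adjunction and the fact that $C$ itself is the $C$-projective module $C\otimes_R R$ get used — and then that the dimension-shifting arguments legitimately run through the doubly-infinite complex $\mathbb{W}_\bullet$ rather than just a one-sided resolution. Everything else (the adjunction isomorphisms, the $\Ext/\Tor$ vanishing for $C$-flat modules, closure of $\mathcal{B}_C(R)$) is supplied by Lemmas 2.13--2.15 and Definition 2.12.
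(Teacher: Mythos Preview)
Your argument is correct in outline but takes a genuinely different route from the paper, and there is one small slip to fix. For the ``if'' direction, the paper argues in two lines: since $M\in\mathcal{B}_C(R)$ and each $W_i,W^i\in\mathcal{F}_C(R)\subseteq\mathcal{B}_C(R)$, every syzygy of $\mathbb{W}_\bullet$ lies in $\mathcal{B}_C(R)$ by Lemma~2.13(1); then $\mathcal{P}_C(R)\,\bot\,\mathcal{B}_C(R)$ (Lemma~2.14) and Lemma~2.4 give the desired exactness immediately. Your adjunction detour through $\mathbb{F}_\bullet$ works too, but note your parenthetical justification is misstated: to see that $\mathbb{F}_\bullet\cong\Hom_R(C,\mathbb{W}_\bullet)$ is exact you need $\Ext^{\geqslant1}_R(C,-)$ (not $\Tor^R_{\geqslant1}(C,-)$) to vanish on the syzygies, which it does because they lie in $\mathcal{B}_C(R)$. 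For the ``only if'' direction, the paper uses Lemma~2.4 and Lemma~2.14 to get $\mathcal{P}_C(R)\,\bot\,M$, then splices the left half of $\mathbb{W}_\bullet$ with an injective resolution of $M$ to obtain a $\Hom_R(\mathcal{P}_C(R),-)$-exact complex and invokes the external characterisation of $\mathcal{B}_C(R)$ from Holm--White [12, Theorem~6.1]. Your approach---verifying the three Bass-class conditions directly via dimension shifting along $\mathbb{W}_\bullet$ and the five-lemma for $\nu_{CCM}$---is more self-contained and avoids the external citation, at the cost of more bookkeeping; the paper's version is shorter but depends on a result not proved here.
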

\begin{proof}
Suppose that $M\in \mathcal {B}_C(R)$. By Lemma 2.13, every kernel
and cokernel of $\mathbb{W}_\bullet $ is in $\mathcal {B}_C(R)$, and
so $\mathbb{W}_\bullet $ is $\Hom_R({P}_C(R),-)$ exact by Lemma 2.4
and Lemma 2.14.

Conversely, if $\mathbb{W}_\bullet $ is $\Hom_R(\mathcal {P}_C(R),-)$
exact, then by Lemma 2.4 and Lemma 2.14, we have $\mathcal
{P}_C(R)~\bot ~M$. Thus, there exists an exact sequence
\begin{center}
$\cdots\longrightarrow W_1 \longrightarrow W_0 \longrightarrow I^0
\longrightarrow I^1 \longrightarrow \cdots$
\end{center}
in $R$-Mod with each $I^i$ injective such that $M \cong$
$\im(W_0\rightarrow I^0) $ and that $\Hom_R(\mathcal {P}_C(R),-)$ leaves it
exact. Hence $M\in \mathcal {B}_C(R)$ by [12, Theorem 6.1].
\end{proof}

Now we are in position to prove the result of linking the classes
$\mathcal {G}\mathcal {F}_C(R)$ and $\mathcal {G}(\mathcal {W}_F)$
together.

\begin{theorem}
Let $M$ be an $R$-module. Then $M\in \mathcal {G}(\mathcal {W}_F)$
if and only if $M\in\mathcal {G}\mathcal {F}_C(R)\cap \mathcal
{B}_C(R)$.
\end{theorem}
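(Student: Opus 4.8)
The plan is to prove both inclusions separately, exploiting the fact that $\mathcal{W}_F$-Gorenstein modules are defined via complete $\mathcal{F}_C(R)$-resolutions that are both $\Hom_R(\mathcal{P}_C(R),-)$ and $\mathcal{I}_C(R)\otimes_R-$ exact, while $G_C$-flat modules (Lemma~2.12) are characterized by one-sided resolutions plus a $\Tor$-vanishing. First I would take $M\in\mathcal{G}(\mathcal{W}_F)$ with its defining sequence $\mathbb{W}_\bullet$ in $\mathcal{F}_C(R)$. Since $\mathbb{W}_\bullet$ is $\Hom_R(\mathcal{P}_C(R),-)$ exact, Proposition~3.3 immediately gives $M\in\mathcal{B}_C(R)$. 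To see $M\in\mathcal{G}\mathcal{F}_C(R)$, note that each $W_i=C\otimes_R F_i$ and $W^i=C\otimes_R F^i$ for flat modules $F_i,F^i$; applying $\Hom_R(C,-)$ to the right half of $\mathbb{W}_\bullet$ and using that the kernels/cokernels lie in $\mathcal{B}_C(R)$ (so $\nu_{CC-}$ is an isomorphism and $\Ext^{\geqslant1}_R(C,-)$ vanishes on them) converts it into a resolution of the appropriate shape by flat modules. Combined with the left half of $\mathbb{W}_\bullet$ (which is already a sequence of flats after noting $C\otimes_R F_i$ versus $F_i$ — here one must be slightly careful and instead build the complete $\mathcal{F}\mathcal{F}_C$-resolution directly), and with the $\mathcal{I}_C(R)\otimes_R-$ exactness hypothesis translating via Lemma~2.14 into $\mathcal{I}_C(R)~\top~M$, one obtains that $M$ satisfies condition (2) of Lemma~2.12, hence $M\in\mathcal{G}\mathcal{F}_C(R)$.

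For the converse, suppose $M\in\mathcal{G}\mathcal{F}_C(R)\cap\mathcal{B}_C(R)$. By Lemma~2.12(3) there is a short exact sequence $0\to M\to C\otimes_R F\to G\to 0$ with $F$ flat and $G\in\mathcal{G}\mathcal{F}_C(R)$; iterating gives a co-proper right $\mathcal{F}_C(R)$-resolution $0\to M\to C\otimes_R F^0\to C\otimes_R F^1\to\cdots$ that is $\mathcal{I}_C(R)\otimes_R-$ exact, whose cosyzygies are all $G_C$-flat. Since $M\in\mathcal{B}_C(R)$ and $\mathcal{B}_C(R)$ is closed under the relevant operations (Lemma~2.13), all these cosyzygies lie in $\mathcal{B}_C(R)$ as well, and by Lemma~2.13(2) and Lemma~2.14 the resolution is also $\Hom_R(\mathcal{P}_C(R),-)$ exact. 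Dually, I need a left half: a co-proper left $\mathcal{F}_C(R)$-resolution $\cdots\to C\otimes_R F_1\to C\otimes_R F_0\to M\to 0$ that stays exact under $\Hom_R(\mathcal{P}_C(R),-)$ and $\mathcal{I}_C(R)\otimes_R-$. Here I would apply $\Hom_R(C,-)$ to $M$, use $M\in\mathcal{B}_C(R)$ to get $\Hom_R(C,M)\in\mathcal{A}_C(R)$, take an ordinary flat resolution of $\Hom_R(C,M)$, apply $C\otimes_R-$, and use $\nu_{CCM}$ to identify $C\otimes_R\Hom_R(C,M)\cong M$; the Auslander-class membership guarantees the needed $\Tor$-vanishing so that the resulting sequence is exact and remains exact after $\Hom_R(\mathcal{P}_C(R),-)$ (via Lemma~2.15, $\mathcal{P}_C(R)~\bot~\mathcal{B}_C(R)$) and after $\mathcal{I}_C(R)\otimes_R-$ (via $\mathcal{A}_C(R)~\top~\mathcal{F}_C(R)$ and $\mathcal{I}_C(R)~\top~M$, which follows from $M$ being $G_C$-flat together with $M\in\mathcal{B}_C(R)$). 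Splicing the left and right halves at $M$ produces the desired $\mathbb{W}_\bullet$, so $M\in\mathcal{G}(\mathcal{W}_F)$.

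The main obstacle I anticipate is the bookkeeping in passing between modules of the form $C\otimes_R F$ and flat modules $F$ themselves when translating Lemma~2.12's characterization of $G_C$-flatness (whose resolutions mix plain flats $F_i$ on the left with $C$-flats $C\otimes_R F^i$ on the right) into the uniform $\mathcal{F}_C(R)$-shape required by the definition of $\mathcal{W}_F$-Gorenstein. Reconciling these two normal forms for the left half of the sequence — and verifying that the exactness of $\Hom_R(\mathcal{P}_C(R),-)$ applied to the spliced complex holds at the splice point, not just on each half — will require careful use of the Auslander/Bass class machinery (Lemmas 2.13--2.15) and of Lemma~2.4 to reduce exactness of Hom/tensor complexes to $\Ext^1$/$\Tor_1$-vanishing on syzygies. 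Everything else is a routine dimension-shifting argument.
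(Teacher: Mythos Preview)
Your proposal is correct and follows essentially the same route as the paper: for the forward direction, Proposition~3.3 gives $M\in\mathcal{B}_C(R)$, and for $G_C$-flatness one invokes the characterization in Lemma~2.11; for the converse, the right half of $\mathbb{W}_\bullet$ comes from the $G_C$-flat structure and the left half from a proper left $\mathcal{P}_C(R)$-resolution that exists because $M\in\mathcal{B}_C(R)$, with the exactness conditions verified via Lemmas~2.4, 2.13, 2.14 and Proposition~3.3.

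The one place where you make your life harder than necessary is the ``main obstacle'' you flag. There is no bookkeeping problem: condition~(2) of Lemma~2.11 requires only $\mathcal{I}_C(R)~\top~M$ together with a right $\mathcal{F}_C(R)$-resolution that is $\mathcal{I}_C(R)\otimes_R-$ exact --- and the right half of your defining complex $\mathbb{W}_\bullet$ already has exactly this shape, with no translation needed. The left half (plain flats) in Definition~2.9 is supplied automatically by any free resolution, so you never have to reconcile two normal forms. Your detour through $\Hom_R(C,-)$ in the forward direction is therefore unnecessary; the paper simply cites Proposition~3.2, Lemma~2.11, and Proposition~3.3 in one line.
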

\begin{proof}
($\Rightarrow$) Let $M\in \mathcal {G}(\mathcal {W}_F)$. We first
have $\mathcal {I}_C(R)~\top~ M$ by Proposition 3.2. Then
$M\in\mathcal {G}\mathcal {F}_C(R)\cap \mathcal {B}_C(R)$ by Lemma
2.11 and Proposition 3.3.

($\Leftarrow$) Let $M\in\mathcal {G}\mathcal {F}_C(R)\cap \mathcal
{B}_C(R)$. Since $M\in\mathcal {G}\mathcal {F}_C(R)$, we have that
$\mathcal {I}_C(R)~\top~ M$ and there exists an exact sequence
 \begin{center}
 $  0\longrightarrow M \longrightarrow W^0 \longrightarrow W^1\longrightarrow \cdots
$
\end{center}
in $R$-Mod with each  $W^i \in \mathcal {F}_C(R)$ such that
$\mathcal {I}_C(R)\otimes_R- $ leaves it exact. On the other hand,
since $M \in\mathcal {B}_C(R) $, it is easy to verify that $M$ has a
proper left $\mathcal {P}_C(R)$-resolution
\begin{center}
$
  \cdots \longrightarrow V_1 \longrightarrow V_0 \longrightarrow M \longrightarrow
  0.
$\end{center}
It follows from Lemma 2.13 and Lemma 2.14 that
$\mathcal {I}_C(R)\otimes_R- $ leaves it exact. Thus, we have the
following exact sequence:
\begin{center}
$
  \cdots \longrightarrow V_1 \longrightarrow  V_0 \longrightarrow W^0 \longrightarrow W^1\longrightarrow \cdots
$\end{center}
such that $M \cong$ $\im(V_0\rightarrow W^0)$. By
Proposition 3.3, we know that $\Hom_R(\mathcal {P}_C(R),-)$ leaves it
exact. It follows that $M\in \mathcal {G}(\mathcal {W}_F)$.
\end{proof}

\begin{theorem}
There exists equivalence of categories:
\begin{center}
$\xymatrix@C=80pt{\mathcal {G}(\mathcal {F})\cap \mathcal {A}_C(R) \ar@<0.5ex>[r]^{~~~~~~~C\otimes_R-} &
~~ \mathcal {G}(\mathcal {W}_F).
   \ar@<0.5ex>[l]^{\indent\Hom_R(C,-)}} $
   \end{center}
\end{theorem}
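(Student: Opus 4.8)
The plan is to show that the two functors $C\otimes_R-$ and $\Hom_R(C,-)$ restrict to the indicated subcategories and are mutually inverse there; the quasi-inverse property on the nose is automatic from the standard Foxby equivalence $\mathcal{A}_C(R)\rightleftarrows\mathcal{B}_C(R)$, so the entire content is that the two functors carry one subcategory into the other. I would first establish, by Theorem 3.4, that the target category $\mathcal{G}(\mathcal{W}_F)$ equals $\mathcal{G}\mathcal{F}_C(R)\cap\mathcal{B}_C(R)$; this reduces the claim to proving
\[
C\otimes_R-\colon \mathcal{G}(\mathcal{F})\cap\mathcal{A}_C(R)\longrightarrow \mathcal{G}\mathcal{F}_C(R)\cap\mathcal{B}_C(R),
\qquad
\Hom_R(C,-)\colon \mathcal{G}\mathcal{F}_C(R)\cap\mathcal{B}_C(R)\longrightarrow \mathcal{G}(\mathcal{F})\cap\mathcal{A}_C(R).
\]

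For the forward direction, take $M\in\mathcal{G}(\mathcal{F})\cap\mathcal{A}_C(R)$ with complete flat resolution $\mathbb{X}=\cdots\to F_1\to F_0\to F^0\to F^1\to\cdots$, so $M\cong\im(F_0\to F^0)$ and $I\otimes_R-$ leaves $\mathbb{X}$ exact for every injective $I$. Since every kernel and cokernel appearing in $\mathbb{X}$ lies in $\mathcal{G}(\mathcal{F})$, and $\mathcal{G}(\mathcal{F})\subseteq\mathcal{A}_C(R)$ is \emph{not} automatic, I would need first to argue all the syzygies of $\mathbb{X}$ lie in $\mathcal{A}_C(R)$: this follows because $M\in\mathcal{A}_C(R)$, each flat module is in $\mathcal{A}_C(R)$ (Lemma 2.13(2)), and $\mathcal{A}_C(R)$ is closed under the two-out-of-three property in short exact sequences (Lemma 2.13(1)), applied along the sequence. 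Consequently $\Tor^R_{\geqslant1}(C,-)$ vanishes on every syzygy, so by Lemma 2.4(2) the complex $C\otimes_R\mathbb{X}$ is exact; its terms $C\otimes_RF_i$, $C\otimes_RF^i$ lie in $\mathcal{F}_C(R)$, and $C\otimes_RM\cong\im(C\otimes_RF_0\to C\otimes_RF^0)$. It remains to check $C\otimes_R\mathbb{X}$ is $\mathcal{I}_C(R)\otimes_R-$ exact and $\Hom_R(\mathcal{P}_C(R),-)$ exact. For the first, $\Hom_R(C,I)\otimes_R(C\otimes_R\mathbb{X})\cong I\otimes_R\mathbb{X}$ via the evaluation iso (using $M$ and syzygies in $\mathcal{A}_C(R)$, together with $\mathcal{A}_C(R)\top\mathcal{F}_C(R)$, Lemma 2.15), which is exact by hypothesis. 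For the second, note $C\otimes_RM\in\mathcal{B}_C(R)$ by the Foxby equivalence, hence by Proposition 3.3 the complex $C\otimes_R\mathbb{X}$ — being an exact complex in $\mathcal{F}_C(R)$ with middle image in $\mathcal{B}_C(R)$ — is automatically $\Hom_R(\mathcal{P}_C(R),-)$ exact. Thus $C\otimes_RM\in\mathcal{G}(\mathcal{W}_F)$.

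For the backward direction, take $N\in\mathcal{G}(\mathcal{W}_F)$ with defining exact sequence $\mathbb{W}_\bullet$ in $\mathcal{F}_C(R)$; by Theorem 3.4 every syzygy of $\mathbb{W}_\bullet$ is in $\mathcal{G}(\mathcal{W}_F)\subseteq\mathcal{B}_C(R)$. Since each $C\otimes_RF$ in $\mathcal{F}_C(R)$ has $\Hom_R(C,C\otimes_RF)\cong F$ (the $\mathcal{A}_C$–$\mathcal{B}_C$ units), applying $\Hom_R(C,-)$ to $\mathbb{W}_\bullet$ — exact because $\Ext^{\geqslant1}_R(C,-)$ kills every syzygy (they lie in $\mathcal{B}_C(R)$), via Lemma 2.4(1) — yields an exact complex of flat modules with $\Hom_R(C,N)\cong\im$ of the middle map. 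One checks it is $I\otimes_R-$ exact for injective $I$ by the same evaluation isomorphism converting $I\otimes_R\Hom_R(C,-)$ back to $\Hom_R(C,I)\otimes_R-$ on the original $\mathbb{W}_\bullet$, which is $\mathcal{I}_C(R)\otimes_R-$ exact by definition of $\mathcal{W}_F$-Gorenstein. Hence $\Hom_R(C,N)\in\mathcal{G}(\mathcal{F})$, and it lies in $\mathcal{A}_C(R)$ since $N\in\mathcal{B}_C(R)$. Finally, the natural transformations $\mu$ and $\nu$ of Definition 2.12 restrict to natural isomorphisms on these subcategories, giving the asserted equivalence. The main obstacle I anticipate is the bookkeeping needed to transport the ``$I\otimes_R-$ exact'' and ``$\mathcal{I}_C(R)\otimes_R-$ exact'' conditions across the two functors via the evaluation isomorphisms $\nu_{CCI}$ and $\mu_{CCF}$ simultaneously for \emph{all} syzygies, which requires knowing in advance (from the two-out-of-three closure of $\mathcal{A}_C(R)$ and $\mathcal{B}_C(R)$) that every syzygy already lies in the relevant Auslander or Bass class.
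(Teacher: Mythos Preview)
Your proposal is correct and follows essentially the same route as the paper: both arguments use Theorem 3.4 together with the two-out-of-three closure of $\mathcal{A}_C(R)$ and $\mathcal{B}_C(R)$ (Lemma 2.13) to ensure that every syzygy lies in the appropriate Foxby class, then transport the relevant exactness conditions across the functors via the natural evaluation isomorphisms $\nu_{CCI}$ and $C\otimes_R\Hom_R(C,-)\cong\mathrm{id}$ on $\mathcal{B}_C(R)$. The paper spells out only the $\Hom_R(C,-)$ direction and dismisses the other as ``similar,'' whereas you treat both explicitly and handle the $\Hom_R(\mathcal{P}_C(R),-)$-exactness in the $C\otimes_R-$ direction cleanly via Proposition 3.3; this is a cosmetic difference, not a substantive one.
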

\begin{proof}
We first show that the functor $\Hom_R(C,-)$ maps $\mathcal
{G}(\mathcal {W}_F)$ to $\mathcal {G}(\mathcal {F})\cap \mathcal
{A}_C(R)$. Assume that $M\in\mathcal {G}(\mathcal {W}_F)$. Then
there exists an exact sequence
\begin{center}
$\mathbb{W}_\bullet=
\cdots\longrightarrow W_1\longrightarrow W_0\longrightarrow
W^0\longrightarrow W^1\longrightarrow\cdots$
\end{center}
in $\mathcal {F}_C(R)$ such that $M \cong$ $\im(W_0\rightarrow W^0) $
and that $\mathbb{W}_\bullet$ is $\Hom_R(\mathcal {P}_C(R),-)$ and
$\mathcal {I}_C(R)\otimes_R-$ exact. So $M\in \mathcal {B}_C(R)$ by
Theorem 3.4, and hence every kernel and cokernel of
$\mathbb{W}_\bullet$ is in ${B}_C(R)$ by Lemma 2.13. Thus,
$\Hom_R(C,\mathbb{W}_\bullet)$ is exact, moreover,
$\Hom_R(C,M)\in \mathcal {A}_C(R)$ by [12, Proposition 4.1].
On the other hand, suppose that $W_i\cong C\otimes_RF_i$ and
$W^i\cong C\otimes_RF^i$, where each $F_i$ and $F^i$ flat. Then we
have the following exact sequence in $R$-Mod:
\begin{center}
$
\Hom_R(C,\mathbb{W}_\bullet) =\indent
  \cdots \longrightarrow F_1 \longrightarrow F_0 \longrightarrow F^0 \longrightarrow F^1 \longrightarrow \cdots
$
\end{center}
such that $\Hom_R(C,M) \cong
\im(F_0\rightarrow F^0)$. For each injective $R$-module $I$,
we have
\begin{center}
$I\otimes_R \Hom_R(C,\mathbb{W}_\bullet) \cong C\otimes_R
\Hom_R(C,I)\otimes_R\Hom_R(C,\mathbb{W}_\bullet)\cong
\Hom_R(C,I)\otimes_R \mathbb{W}_\bullet $
\end{center}
is exact. Hence, $\Hom_R(C,M)\in\mathcal {G}(\mathcal {F})$.

The proof of $C\otimes_R-$ maps $\mathcal {G}(\mathcal {F})\cap
\mathcal {A}_C(R)$ to $\mathcal {G}(\mathcal {W}_F)$ is similar.
\end{proof}

\begin{corollary}
Let $R$ be a GF-closed ring. Then the class $\mathcal {G}(\mathcal
{W}_F)$ is closed under extensions, kernels of epimorphisms and
direct summands.
\end{corollary}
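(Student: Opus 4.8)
The plan is to transfer the known closure properties of $\mathcal{G}(\mathcal{F})$ (which hold over a GF-closed ring) and of $\mathcal{B}_C(R)$ through the characterization $\mathcal{G}(\mathcal{W}_F) = \mathcal{G}\mathcal{F}_C(R)\cap\mathcal{B}_C(R)$ established in Theorem 3.4. First I would reduce each of the three closure properties to the corresponding statements for $\mathcal{G}\mathcal{F}_C(R)$ and for $\mathcal{B}_C(R)$ separately, since an intersection of two classes closed under an operation is again closed under that operation. The class $\mathcal{B}_C(R)$ is already known to be closed under extensions, kernels of epimorphisms and direct summands (indeed, by Lemma 2.13(1) it satisfies the two-out-of-three property in short exact sequences, and closure under summands follows from a standard Eilenberg-swindle-type argument together with Lemma 2.13(1)). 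So the crux is to prove the same three closure properties for $\mathcal{G}\mathcal{F}_C(R)$ over a GF-closed ring.

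For $\mathcal{G}\mathcal{F}_C(R)$, the key is the Foxby-type equivalence: the functors $C\otimes_R-$ and $\Hom_R(C,-)$ restrict to an equivalence between $\mathcal{G}(\mathcal{F})\cap\mathcal{A}_C(R)$ and $\mathcal{G}\mathcal{F}_C(R)\cap\mathcal{B}_C(R)$ (this is Theorem 3.5 combined with Theorem 3.4). Given a short exact sequence $0\to M'\to M\to M''\to 0$ with $M', M''\in\mathcal{G}\mathcal{F}_C(R)\cap\mathcal{B}_C(R)$, I would apply $\Hom_R(C,-)$: since $\mathcal{B}_C(R)$ is closed under extensions, $M\in\mathcal{B}_C(R)$, so all three modules lie in $\mathcal{B}_C(R)$ and $\Tor^R_{\geqslant 1}(C,-)$-type obstructions vanish, giving a short exact sequence $0\to\Hom_R(C,M')\to\Hom_R(C,M)\to\Hom_R(C,M'')\to 0$ of modules in $\mathcal{G}(\mathcal{F})\cap\mathcal{A}_C(R)$. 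Now $\mathcal{G}(\mathcal{F})$ is closed under extensions precisely because $R$ is GF-closed, and $\mathcal{A}_C(R)$ is closed under extensions by Lemma 2.13(1), so $\Hom_R(C,M)\in\mathcal{G}(\mathcal{F})\cap\mathcal{A}_C(R)$; applying $C\otimes_R-$ and using $M\in\mathcal{B}_C(R)$ to recover $M\cong C\otimes_R\Hom_R(C,M)$, we conclude $M\in\mathcal{G}\mathcal{F}_C(R)$. The argument for kernels of epimorphisms is parallel, using that $\mathcal{G}(\mathcal{F})$ is closed under kernels of epimorphisms (this is known for GF-closed rings, since over such rings $\mathcal{G}(\mathcal{F})$ is a resolving-type class) and that $\mathcal{A}_C(R)$ satisfies two-out-of-three. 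For direct summands: if $M\oplus N\in\mathcal{G}\mathcal{F}_C(R)\cap\mathcal{B}_C(R)$, then $\mathcal{B}_C(R)$ is closed under summands so $M\in\mathcal{B}_C(R)$, hence $\Hom_R(C,M)$ is a direct summand of $\Hom_R(C,M\oplus N)\in\mathcal{G}(\mathcal{F})\cap\mathcal{A}_C(R)$, and both $\mathcal{G}(\mathcal{F})$ and $\mathcal{A}_C(R)$ are closed under summands, so $\Hom_R(C,M)\in\mathcal{G}(\mathcal{F})\cap\mathcal{A}_C(R)$; transporting back via $C\otimes_R-$ gives $M\in\mathcal{G}\mathcal{W}_F$-Gorenstein.

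Alternatively, and perhaps more cleanly, one can argue entirely on the $\mathcal{W}_F$-Gorenstein side: given $0\to M'\to M\to M''\to 0$ with $M',M''\in\mathcal{G}(\mathcal{W}_F)$, splice the right $\mathcal{F}_C(R)$-resolutions of $M'$ and $M''$ together with their co-proper left $\mathcal{P}_C(R)$-resolutions using the horseshoe lemma (which applies because the relevant $\Ext^1$ obstructions vanish by Proposition 3.2, as $\mathcal{P}_C(R)\perp\mathcal{G}(\mathcal{W}_F)$), and then verify the exactness conditions on the spliced complex using the $\Hom_R$- and $\otimes_R$-exactness criteria of Lemma 2.4 together with Proposition 3.3. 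However, this approach secretly still needs the GF-closed hypothesis to guarantee that the resulting total complex stays exact, which is exactly where one invokes that $\mathcal{G}(\mathcal{F})$ is closed under extensions; I would therefore prefer the Foxby-equivalence route since it isolates the role of the GF-closed hypothesis most transparently.

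**The main obstacle** I anticipate is closure under direct summands when the ambient ring is only GF-closed rather than coherent: one must be careful that Remark 2.10(2) only asserts $\mathcal{G}\mathcal{F}_C(R)$ is closed under arbitrary direct sums, not summands, so the summand statement genuinely has to be routed through the equivalence of Theorem 3.5 and the corresponding (known) fact that $\mathcal{G}(\mathcal{F})$ is closed under summands over any ring. A secondary subtlety is verifying that the functor $\Hom_R(C,-)$ applied to a short exact sequence inside $\mathcal{B}_C(R)$ remains short exact — this uses $\Ext^1_R(C,M')=0$, which holds because $M'\in\mathcal{B}_C(R)$ — and symmetrically that $C\otimes_R-$ is exact on short exact sequences inside $\mathcal{A}_C(R)$; both are immediate from the definitions of the Auslander and Bass classes, but they must be stated to make the transport arguments legitimate.
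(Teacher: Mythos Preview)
Your approach is essentially the paper's own: for closure under extensions, both of you apply $\Hom_R(C,-)$ to land the short exact sequence in $\mathcal{G}(\mathcal{F})\cap\mathcal{A}_C(R)$ via Theorem~3.5, use that $R$ is GF-closed together with Lemma~2.13(1), and then return via $C\otimes_R-$ and the isomorphism $N\cong C\otimes_R\Hom_R(C,N)$ for $N\in\mathcal{B}_C(R)$. For kernels of epimorphisms and direct summands the paper merely says the proof is ``similar to [2, Theorem 2.3 and Corollary 2.6]'', which can be read either as a direct adaptation of Bennis's constructions to $\mathcal{G}(\mathcal{W}_F)$ or as the same Foxby-transport you carry out; either way your argument works, with one small correction: closure of $\mathcal{G}(\mathcal{F})$ under direct summands is established in [2, Corollary~2.6] \emph{for GF-closed rings}, not for arbitrary rings as you write, though this is harmless since GF-closed is already your hypothesis.
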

\begin{proof}
We first show that the class $\mathcal {G}(\mathcal {W}_F)$ is
closed under extensions when $R$ is GF-closed. Consider the
following short exact sequence:
\begin{center}
$0\longrightarrow M\longrightarrow N\longrightarrow K\longrightarrow0$
\end{center}
with $M$ and $N$ belong to $\mathcal {G}(\mathcal {W}_F)$. Since
$M\in\mathcal {B}_C(R)$ by Theorem 3.4, we get the next exact
sequence
\begin{center}
$0\longrightarrow \Hom_R(C,M)\longrightarrow \Hom_R(C,N)\longrightarrow \Hom_R(C,K) \longrightarrow0.$
\end{center}
It follows from Theorem 3.5 that $\Hom_R(C,M)$ and
$\Hom_R(C,K)$ belong to $\mathcal {G}(\mathcal {F})\cap
\mathcal {A}_C(R)$. Thus, $\Hom_R(C,N)$ belongs to $\mathcal
{G}(\mathcal {F})\cap \mathcal {A}_C(R)$. On the other hand, since
$N\in\mathcal {B}_C(R)$ by Lemma 2.13 and Theorem 3.4, $N\cong
C\otimes_R$$\Hom_R(C,N)\in\mathcal {G}(\mathcal {W}_F)$ by Theorem
3.5, as desired.

The proof of the class $\mathcal {G}(\mathcal {W}_F)$ is closed
under kernels of epimorphisms and direct summands is similar to [2,
Theorem 2.3 and Corollary 2.6].
\end{proof}

\begin{lemma}
Let $R$ be a GF-closed ring. For every short exact sequence
$0\rightarrow G_1\rightarrow G_0\rightarrow M\rightarrow0$ in
$R$-Mod with $G_0,G_1\in\mathcal {G}(\mathcal {W}_F)$, if
$\Tor^R_1(\mathcal {I}_C(R),M)=0$, then $M\in\mathcal {G}(\mathcal
{W}_F)$.
\end{lemma}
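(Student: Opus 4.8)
The plan is to push the problem across the Foxby equivalence of Theorem 3.5 and appeal to the known behaviour of Gorenstein flat modules over a GF-closed ring. First I would unpack Theorem 3.4: $G_0,G_1\in\mathcal{G}\mathcal{F}_C(R)\cap\mathcal{B}_C(R)$. Since $\mathcal{B}_C(R)$ has the three-space property (Lemma 2.13(1)), the cokernel $M$ lies in $\mathcal{B}_C(R)$. Running the long exact sequence $\Tor^R_\ast(\Hom_R(C,I),-)$ attached to $0\to G_1\to G_0\to M\to 0$ for an injective $I$, and using $\mathcal{I}_C(R)~\top~G_0$ and $\mathcal{I}_C(R)~\top~G_1$ (Proposition 3.2) together with the hypothesis $\Tor^R_1(\mathcal{I}_C(R),M)=0$, one obtains $\mathcal{I}_C(R)~\top~M$.

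Next I would apply $\Hom_R(C,-)$. Since $G_1\in\mathcal{B}_C(R)$ we have $\Ext^1_R(C,G_1)=0$, so there is a short exact sequence $0\to\Hom_R(C,G_1)\to\Hom_R(C,G_0)\to\Hom_R(C,M)\to 0$; write $A_i=\Hom_R(C,G_i)$ and $N=\Hom_R(C,M)$. By Theorem 3.5, $A_0,A_1\in\mathcal{G}(\mathcal{F})$, and since $M\in\mathcal{B}_C(R)$ also $N\in\mathcal{A}_C(R)$ with $C\otimes_R N\cong M$ (the evaluation $\nu_{CCM}$ being an isomorphism). The Tor-vanishing transfers: taking a flat resolution $F_\bullet\to N$, the complex $C\otimes_R F_\bullet\to M$ is exact because $\Tor^R_{\geqslant 1}(C,N)=0$; its terms satisfy $\Tor^R_{\geqslant 1}(\Hom_R(C,I),C\otimes_R F_i)\cong\Tor^R_{\geqslant 1}(\Hom_R(C,I),C)\otimes_R F_i=0$ for every injective $I$ (because $I\in\mathcal{B}_C(R)$ forces $\Tor^R_{\geqslant 1}(C,\Hom_R(C,I))=0$), and $\Hom_R(C,I)\otimes_R(C\otimes_R F_i)\cong(C\otimes_R\Hom_R(C,I))\otimes_R F_i\cong I\otimes_R F_i$. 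Hence $\Tor^R_n(I,N)\cong\Tor^R_n(\Hom_R(C,I),M)$ for all $n$ and all injective $I$, which together with $\mathcal{I}_C(R)~\top~M$ gives $\Tor^R_{\geqslant 1}(I,N)=0$ for every injective $I$.

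This reduces the lemma to a purely ``classical'' statement: if $R$ is GF-closed, $0\to A_1\to A_0\to N\to 0$ is exact with $A_0,A_1\in\mathcal{G}(\mathcal{F})$, and $\Tor^R_{\geqslant 1}(I,N)=0$ for all injective $I$, then $N\in\mathcal{G}(\mathcal{F})$. I would obtain this from [2]: over a GF-closed ring $\mathcal{G}(\mathcal{F})$ is closed under cokernels of monomorphisms with this Tor-vanishing (equivalently, a module of finite Gorenstein flat dimension is Gorenstein flat as soon as its higher Tor's against injectives vanish). Concretely, embed $A_0$ in a flat module $F^0$ with $F^0/A_0\in\mathcal{G}(\mathcal{F})$ using a piece of a complete flat resolution, push out along $A_1\hookrightarrow A_0$ to get $0\to A_1\to F^0\to D\to 0$ and $0\to N\to D\to F^0/A_0\to 0$; closure under extensions (GF-closedness) makes $D\in\mathcal{G}(\mathcal{F})$, one checks $\Tor^R_{\geqslant 1}(I,D)=0$, and iterating builds a complete flat resolution of $N$ which the Tor-vanishing forces to be $I\otimes_R-$ exact. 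Granting $N\in\mathcal{G}(\mathcal{F})$, we have $N\in\mathcal{G}(\mathcal{F})\cap\mathcal{A}_C(R)$, so Theorem 3.5 yields $M\cong C\otimes_R N\in\mathcal{G}(\mathcal{W}_F)$.

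The main obstacle is precisely this reduced Gorenstein flat statement — that over a GF-closed ring $\mathcal{G}(\mathcal{F})$ absorbs such cokernels; the surrounding steps (three-space properties, the long exact sequences, the Foxby translation of $\Tor$) are routine. To avoid re-doing the homotopy bookkeeping when verifying that the iterated pushouts really close up into a complete flat resolution, the cleanest device is the dimension formula $\mathrm{Gfd}_R(N)=\sup\{\,n:\Tor^R_n(I,N)\neq 0\text{ for some injective }I\,\}$, valid for modules of finite Gorenstein flat dimension over GF-closed rings, applied to $N$ with $\mathrm{Gfd}_R(N)\leqslant 1$.
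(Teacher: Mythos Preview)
Your argument is correct. The Foxby transfer is sound: $M\in\mathcal{B}_C(R)$ by the two-out-of-three property, the long exact $\Tor$ sequence plus the hypothesis give $\mathcal{I}_C(R)~\top~M$, and your computation $\Tor^R_n(I,\Hom_R(C,M))\cong\Tor^R_n(\Hom_R(C,I),M)$ via the $\mathcal{F}_C(R)$-resolution $C\otimes_R F_\bullet\to M$ (which is $\Hom_R(C,I)\otimes_R-$ acyclic by Lemma 2.14) is valid. The reduced classical statement is exactly [2, Theorem 2.3] (only $\Tor^R_1$ is needed there, so you prove more than necessary), and the return trip through Theorem 3.5 is immediate.

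The paper takes a different route: rather than passing to $\mathcal{G}(\mathcal{F})$ via $\Hom_R(C,-)$, it adapts the proof of [2, Theorem 2.3] \emph{intrinsically} in the $\mathcal{W}_F$-setting, using that $\mathcal{F}_C(R)$ is closed under direct summands together with [14, Lemma 4.1] to rerun the pushout construction with $C$-flat modules in place of flat ones. Your approach has the advantage of citing [2] as a black box and avoiding any re-verification of the combinatorics, at the cost of the $\Tor$-translation lemma; the paper's approach keeps everything in one category and makes the parallel with the classical case transparent, but in principle requires checking that each step of Bennis's argument survives the change of generators from flat to $C$-flat.
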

\begin{proof}
By the fact that the class $\mathcal
 {F}_C(R)$ is closed direct summands and [14, Lemma 4.1], the proof of the lemma is similar to [2, Theorem 2.3].
\end{proof}

\begin{theorem}
Let $R$ be a GF-closed ring and $M$ an $R$-module with finite
$\mathcal {G}(\mathcal {W}_F)$-projective dimension and let $n$ be a
non-negative integer. Then the following are equivalent:

(1) $\mathcal {G}(\mathcal {W}_F)$-\pd$_R(M)\leqslant n$.

(2) For every non-negative integer $t$ such that $0\leqslant
t\leqslant n$, there exists an exact sequence $0\rightarrow
W_n\rightarrow \cdots\rightarrow W_1\rightarrow W_0\rightarrow
M\rightarrow0$ in $R$-Mod such that $W_t\in\mathcal {G}(\mathcal
{W}_F)$ and $W_i\in \mathcal {F}_C(R)$ for $i\neq t$.

(3) There exists a short exact sequence $0\rightarrow K\rightarrow G
\rightarrow M\rightarrow0$ in $R$-Mod with $G\in\mathcal
{G}(\mathcal {W}_F)$ and $\mathcal {F}_C(R)$-\pd$_R(K)\leqslant n-1$.

(4) There exists a short exact sequence $0\rightarrow M\rightarrow
K'\rightarrow G'\rightarrow0$ in $R$-Mod with $G'\in\mathcal
{G}(\mathcal {W}_F)$ and $\mathcal {F}_C(R)$-\pd$_R(K')\leqslant n$.

(5) There exists an exact sequence $0\rightarrow G\rightarrow
V_{n-1}\rightarrow\cdots\rightarrow V_0\rightarrow M\rightarrow0$ in
$R$-Mod with $G\in\mathcal {G}(\mathcal {W}_F)$ and $V_i \in
\mathcal {P}_C(R)$ for all $0\leqslant i \leqslant n-1$.

(6) For every exact sequence $0 \rightarrow K_n \rightarrow
G_{n-1}\rightarrow \cdots \rightarrow G_0 \rightarrow M \rightarrow
0$ in $R$-Mod with $G_i \in\mathcal {G}(\mathcal {W}_F)$ for all
$0\leqslant i\leqslant n-1$, then also $K_n\in\mathcal {G}(\mathcal
{W}_F)$.

(7) $\Tor^{n+j}_R(U,M)=0$ for all $j\geqslant1$ and all $U\in
\mathcal {I}_C(R)$.

(8) $\Tor^{n+j}_R(U,M)=0$ for all $j\geqslant1$ and all $U$ with
$\mathcal {I}_C(R)$-id$_R(U)<\infty$. \vspace{3mm}

Furthermore, we have that\vspace{2mm}

$\mathcal {G}(\mathcal {W}_F)\textrm{-\pd}_R(M) =
\sup\{~n\in\mathbb{N}~|~\Tor^n_R(U,M)\neq
0~\textrm{for some}~U\in \mathcal {I}_C(R)\}$\\
$\indent\indent\indent\indent\indent\indent\indent=
\sup\{~n\in\mathbb{N}~|~ \Tor^n_R(U,M)\neq
0~\textrm{for some}~U~\textrm{with}~\mathcal
{I}_C(R)\textrm{-id}_R(U)<\infty\}.$
\end{theorem}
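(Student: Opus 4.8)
The plan is to prove the chain of equivalences (1)--(8) by a cyclic argument, using the relative homological machinery assembled in Section 2 and the stability results already proved for $\mathcal{G}(\mathcal{W}_F)$ in Corollary 3.6 and Lemma 3.7. The conceptual backbone is the Foxby equivalence of Theorem 3.5, which lets us transport any statement about $\mathcal{G}(\mathcal{W}_F)$-resolutions over to a statement about honest Gorenstein flat resolutions inside $\mathcal{A}_C(R)$; combined with $\mathcal{I}_C(R)\,\top\,\mathcal{G}(\mathcal{W}_F)$ from Proposition 3.2 and the isomorphism $U\otimes_R(C\otimes_R-)\cong \operatorname{Hom}_R(C,U)\otimes_R-$ for $U\in\mathcal{I}_C(R)$ used in the proof of Theorem 3.5, the $\operatorname{Tor}$ vanishing in (7) and (8) becomes the familiar characterization of finite Gorenstein flat dimension via vanishing of $\operatorname{Tor}$ against injectives.

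First I would establish the implications among the resolution-theoretic conditions. The implications $(1)\Rightarrow(2)$, $(2)\Rightarrow(3)$, $(2)\Rightarrow(4)$ and $(1)\Rightarrow(5)$ are built by the standard horseshoe-and-dimension-shift technique: start from a $\mathcal{G}(\mathcal{W}_F)$-resolution of length $\le n$, use that $\mathcal{F}_C(R)$ is a projective generator-type subclass to replace the higher syzygies by $\mathcal{F}_C(R)$-modules (or by $\mathcal{P}_C(R)$-modules for (5), since every $C$-flat module is a direct limit of $C$-projectives is not needed—rather one uses that $M\in\mathcal{B}_C(R)$ admits a proper left $\mathcal{P}_C(R)$-resolution), and Lemma 3.7 to see that the remaining kernel lands back in $\mathcal{G}(\mathcal{W}_F)$. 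The implications $(3)\Rightarrow(1)$, $(4)\Rightarrow(1)$, $(5)\Rightarrow(1)$ follow by splicing and applying Lemma 2.13 plus the fact that modules of finite $\mathcal{F}_C(R)$-projective dimension lie in $\mathcal{B}_C(R)$. The equivalence $(1)\Leftrightarrow(6)$ is the key syzygy-rigidity statement: given any $\mathcal{G}(\mathcal{W}_F)$-resolution, the $n$-th syzygy is independent (modulo $\mathcal{G}(\mathcal{W}_F)$) by a generalized Schanuel argument, which works because $\mathcal{G}(\mathcal{W}_F)$ is closed under extensions, direct summands and kernels of epimorphisms by Corollary 3.6; one also needs the comparison that any $\mathcal{F}_C(R)$-resolution maps to a $\mathcal{G}(\mathcal{W}_F)$-resolution, so that the $K_n$ produced from a general $\mathcal{G}(\mathcal{W}_F)$-resolution agrees with the one from the specific $\mathcal{F}_C(R)$-based resolution of (2).

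Next I would close the loop through the $\operatorname{Tor}$-conditions. For $(1)\Rightarrow(7)$: take a resolution as in (2) with all but one term in $\mathcal{F}_C(R)$ and the distinguished term in $\mathcal{G}(\mathcal{W}_F)$; since $\mathcal{I}_C(R)\,\top\,\mathcal{F}_C(R)$ by Lemma 2.14 and $\mathcal{I}_C(R)\,\top\,\mathcal{G}(\mathcal{W}_F)$ by Proposition 3.2, dimension-shifting along this resolution gives $\operatorname{Tor}^{n+j}_R(U,M)\cong\operatorname{Tor}^{j}_R(U,W_n)=0$ for $U\in\mathcal{I}_C(R)$ and $j\ge1$. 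The implication $(7)\Rightarrow(8)$ is the usual induction on $\mathcal{I}_C(R)\text{-id}_R(U)$ using the long exact sequence in $\operatorname{Tor}$. For $(8)\Rightarrow(1)$, equivalently $(7)\Rightarrow(6)$: take any exact sequence $0\to K_n\to G_{n-1}\to\cdots\to G_0\to M\to0$ with $G_i\in\mathcal{G}(\mathcal{W}_F)$; dimension-shift the $\operatorname{Tor}$-vanishing to get $\operatorname{Tor}^R_1(\mathcal{I}_C(R),K_n)=0$, and then apply Lemma 3.7 repeatedly—peeling off one $G_i$ at a time from the top, each time the running cokernel has vanishing $\operatorname{Tor}_1$ against $\mathcal{I}_C(R)$ by the same shift—to conclude $K_n\in\mathcal{G}(\mathcal{W}_F)$. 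Finally, the displayed dimension formula is read off: $(7)$ forces $\mathcal{G}(\mathcal{W}_F)\text{-\pd}_R(M)\le n$ whenever the $\operatorname{Tor}$'s vanish above degree $n$, and $(1)\Rightarrow(7)$ shows that if $\mathcal{G}(\mathcal{W}_F)\text{-\pd}_R(M)\le n$ then those $\operatorname{Tor}$'s do vanish, so the supremum over $n$ with $\operatorname{Tor}^n_R(U,M)\ne0$ for some $U\in\mathcal{I}_C(R)$ equals the dimension; the version with $\mathcal{I}_C(R)\text{-id}_R(U)<\infty$ then follows from $(7)\Leftrightarrow(8)$.

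The main obstacle I anticipate is $(8)\Rightarrow(1)$ (equivalently completing the cycle back to condition (1)): everything else is dimension-shifting and splicing, but turning the cohomological vanishing $(8)$ into an actual finite $\mathcal{G}(\mathcal{W}_F)$-resolution requires the GF-closed hypothesis in an essential way, precisely through Lemma 3.7, which is the only tool that upgrades "two consecutive syzygies are $\mathcal{W}_F$-Gorenstein and a $\operatorname{Tor}_1$ vanishes" to "the cokernel is $\mathcal{W}_F$-Gorenstein." Care is needed to ensure the finiteness hypothesis ($\mathcal{G}(\mathcal{W}_F)\text{-\pd}_R(M)<\infty$, assumed in the theorem) is genuinely used to start the induction—without it one only gets that some high syzygy is $\mathcal{W}_F$-Gorenstein under the extra assumption, not unconditionally—so the argument must first fix a finite $\mathcal{G}(\mathcal{W}_F)$-resolution (which exists by hypothesis), then compare its syzygies with those arising from an arbitrary resolution via the Schanuel-type lemma of $(1)\Leftrightarrow(6)$, and only then invoke the $\operatorname{Tor}$-vanishing to pin down the length. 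A secondary technical point is verifying that the various resolutions in (2)--(5) can be taken proper (co-proper) in the appropriate sense so that the functors $\operatorname{Hom}_R(C,-)$ and $U\otimes_R-$ behave; this is handled by Lemma 2.13, Lemma 2.14 and the membership $M\in\mathcal{B}_C(R)$ guaranteed by Theorem 3.4.
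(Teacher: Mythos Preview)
Your overall strategy coincides with the paper's: the same dimension-shifting for $(1)\Rightarrow(7)\Rightarrow(8)$, the same use of Corollary~3.6 for $(1)\Rightarrow(2)$ and $(1)\Rightarrow(5)$, a mapping-cone/Schanuel comparison for $(5)\Rightarrow(6)$, and Lemma~3.7 for closing the loop $(8)\Rightarrow(1)$. The Foxby equivalence you advertise as a ``conceptual backbone'' is not actually used in your argument (nor in the paper's); everything runs directly in $\mathcal{G}(\mathcal{W}_F)$.

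Two points need tightening. First, $(4)\Rightarrow(1)$ cannot be obtained ``by splicing'': in the sequence $0\to M\to K'\to G'\to 0$ the module $M$ sits on the left, so attaching an $\mathcal{F}_C(R)$-resolution of $K'$ does not produce a resolution of $M$. The paper instead proves $(4)\Rightarrow(3)$ by a pullback: take $0\to K\to W\to K'\to 0$ with $W\in\mathcal{F}_C(R)$ and $\mathcal{F}_C(R)\text{-}\mathrm{pd}_R(K)\le n-1$, pull back along $M\to K'$, and use Corollary~3.6 on the resulting middle row to see that the pulled-back module lies in $\mathcal{G}(\mathcal{W}_F)$. Second, your first description of $(7)\Rightarrow(6)$ has Lemma~3.7 running in the wrong direction: Lemma~3.7 takes $0\to G_1\to G_0\to N\to 0$ with $G_0,G_1\in\mathcal{G}(\mathcal{W}_F)$ and $\mathrm{Tor}_1^R(\mathcal{I}_C(R),N)=0$ and outputs $N\in\mathcal{G}(\mathcal{W}_F)$; it cannot be applied ``from the top'' of an arbitrary length-$n$ resolution to conclude that the \emph{kernel} $K_n$ is in $\mathcal{G}(\mathcal{W}_F)$. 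Your ``main obstacle'' paragraph has the right repair, which is exactly what the paper does (citing \cite{2}, Theorem~2.8): use the finiteness hypothesis to fix an $\mathcal{F}_C(R)$-resolution whose $m$-th syzygy $K_m$ lies in $\mathcal{G}(\mathcal{W}_F)$ (via the already-proved $(1)\Rightarrow(6)$), then iterate Lemma~3.7 on the short exact sequences $0\to K_{i}\to W_{i-1}\to K_{i-1}\to 0$ \emph{downward} from $i=m$ to $i=n+1$, using $\mathrm{Tor}_1^R(\mathcal{I}_C(R),K_{i-1})\cong\mathrm{Tor}_i^R(\mathcal{I}_C(R),M)=0$ for $i>n$, to reach $K_n\in\mathcal{G}(\mathcal{W}_F)$.
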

\begin{proof}
(2) $\Rightarrow$ (3) $\Rightarrow$ (1) and (6) $\Rightarrow$ (1)
are clear.

(1) $\Rightarrow$ (7) $\Rightarrow$ (8) follow from usual dimension
shifting argument.

(1) $\Rightarrow$ (2) Since the class $\mathcal {G}(\mathcal {W}_F)$
is closed under extensions by Corollary 3.6, the proof is similar to
[13, Theorem 2.6].

(3) $\Rightarrow$ (4) Since $G \in \mathcal {G}(\mathcal {W}_F)$,
there exist a short exact sequence $  0 \rightarrow G \rightarrow W
\rightarrow G' \rightarrow 0$ in $R$-Mod with $W\in\mathcal
{F}_C(R)$ and $G' \in \mathcal {G}(\mathcal {W}_F)$. Now consider
the following push-out diagram:
\begin{center}
$\xymatrix{
     & & & 0\ar[d]_{}  & 0 \ar[d]_{} &  \\
     & 0\ar[r]& K\ar@{=}[d]^{} \ar[r]& G\ar[d]\ar[r] &M\ar[d]\ar[r]&0 \\
& 0\ar[r]& K \ar[r]& W\ar[d]\ar[r] &K'\ar[d]\ar[r]&0 \\
&&&G'\ar[d]_{} \ar@{=}[r]^{} & G' \ar[d]_{} \\
    && & 0& 0  &
      }$
\end{center}
From the second row in the above diagram, we know $\mathcal
{F}_C(R)$-\pd$_R(K')\leqslant n$. So the third column is as desired.

(4) $\Rightarrow$ (3) Since $\mathcal {F}_C(R)$-\pd$_R(K')\leqslant
n$, there exist a short exact sequence $  0 \rightarrow K
\rightarrow W \rightarrow K' \rightarrow 0$ in $R$-Mod with
$W\in\mathcal {F}_C(R)$ and $\mathcal {F}_C(R)$-\pd$_R(K)\leqslant
n-1$. Then consider the following pullback diagram:
\begin{center}
 $\xymatrix{
     &&  0\ar[d]_{}  & 0 \ar[d]_{} &  \\
      && K\ar@{=}[r]^{} \ar[d]& K\ar[d] & \\
    & 0 \ar[r]^{} & G\ar[d]_{} \ar[r]^{} & W\ar[d]_{} \ar[r]^{} & G' \ar@{=}[d]_{} \ar[r]^{} & 0  \\
      &0  \ar[r]^{} & M \ar[d]\ar[r]^{} &K'\ar[d]_{} \ar[r]^{} & G'  \ar[r]^{} & 0  \\
    & & 0& 0  &
      }$
\end{center}
From the second row, we know that $G\in\mathcal {G}(\mathcal {W}_F)$
by Corollary 3.6. So the first column is as desired.

(1) $\Rightarrow$ (5) It suffices to prove the case for $n=1$.
Assume that $\mathcal {G}(\mathcal {W}_F)$-\pd$_R(M)\leqslant 1$.
Then there exists a short exact sequence $
  0 \rightarrow G_1 \rightarrow G_0 \rightarrow M \rightarrow 0$ in $R$-Mod with $G_0,G_1\in\mathcal {G}(\mathcal {W}_F)$.
By Theorem 3.4, we know that $G_0\in\mathcal {B}_C(R)$. Thus, it is
easy to verify that there exists a short exact sequence $
  0 \rightarrow G_0'  \rightarrow V \rightarrow G_0 \rightarrow 0
$ in $R$-Mod such that $V\in\mathcal {P}_C(R)$, then also
$V\in\mathcal {G}(\mathcal {W}_F)$. By Corollary 3.6, we have $G_0'
\in\mathcal {G}(\mathcal {W}_F)$. Now consider the following
pullback diagram:
\begin{center}
 $\xymatrix{
     &&  0\ar[d]_{}  & 0 \ar[d]_{} &  \\
      && G_0'\ar@{=}[r]^{} \ar[d]& G_0'\ar[d] & \\
    & 0 \ar[r]^{} & G\ar[d]_{} \ar[r]^{} & V\ar[d]_{} \ar[r]^{} & M \ar@{=}[d]_{} \ar[r]^{} & 0  \\
      &0  \ar[r]^{} & G_1 \ar[d]\ar[r]^{} &G_0\ar[d]_{} \ar[r]^{} & M  \ar[r]^{} & 0  \\
    & & 0& 0  &
      }$
\end{center}
From the first column in the above diagram, we know that $G
\in\mathcal {G}(\mathcal {W}_F)$ by Corollary 3.6. So the middle row
is as desired.

(5) $\Rightarrow$ (6) Let $0 \rightarrow K_n \rightarrow
G_{n-1}\rightarrow \cdots \rightarrow G_0 \rightarrow M \rightarrow
0$ be an exact sequence in $R$-Mod with each $G_i\in\mathcal
{G}(\mathcal {W}_F)$, then also $ G_i \in\mathcal {B}_C(R)$ by
Theorem 3.4. Thus, $K_n \in\mathcal {B}_C(R)$ and $\mathcal
{P}_C(R)~\bot~ K_n$ by Lemma 2.13 and Lemma 2.14. Then we have the
following commutative diagram with exact rows:
\begin{center}
$\xymatrix{
  0 \ar[r]^{} &G_n  \ar[d]_{} \ar[r]^{} & V_{n-1}\ar[d]_{}
  \ar[r]^{} & \cdots   \ar[r]^{} &V_1  \ar[d]_{} \ar[r]^{} &V_0
  \ar[d]_{} \ar[r]^{} & M \ar@{=}[d] \ar[r]^{} & 0  \\
 0  \ar[r]^{} &K_n  \ar[r]^{} & G_{n-1}\ar[r]^{} & \cdots \ar[r]^{} &
  G_1 \ar[r]^{} &  G_0 \ar[r]^{} & M  \ar[r]^{} & 0    }
$
\end{center}
Thus, the mapping cone
\begin{center}
$
  0\longrightarrow G_n \longrightarrow V_{n-1}\oplus K_n \longrightarrow \cdots \longrightarrow V_0\oplus G_1 \longrightarrow G_0 \longrightarrow
  0
$
\end{center}
is exact. It follows from Corollary 3.6 that $K_n \in
\mathcal {G}(\mathcal {W}_F)$.

(8) $\Rightarrow$ (1) By Lemma 3.7, the proof is similar to [2,
Theorem 2.8].

The last claim is an immediate consequence of the equivalent of (1),
(7) and (8).
\end{proof}

Let $n$ be a non-negative integer. In what follows, we denote by
$\mathcal {G}\textrm{-}\textrm{flat}_{\leqslant n}$ (resp.,
$\mathcal {G}_C\textrm{-}\textrm{flat}_{\leqslant n}$) the class of
modules with finite Gorenstein flat (resp., $\mathcal {G}(\mathcal
{W}_F)$-projective) dimension at most $n$.

\begin{theorem}(Foxby equivalence) Let $\mathcal {F}(R)$ be the
class of flat modules. There are equivalences of categories:
\begin{center}
$\xymatrix@C=100pt@R=20pt{
  \mathcal {F}(R) \ar@{^{(}->}[d] \ar@<0.5ex>[r]^{C\otimes_R-} &\mathcal {F}_C(R) \ar@{^{(}->}[d] \ar@<0.5ex>[l]^{\Hom_R(C,-)}\\
  \mathcal {G}(\mathcal {F})\cap \mathcal {A}_C(R)\ar@{^{(}->}[d] \ar@<0.5ex>[r]^{C\otimes_R-} & \mathcal {G}(\mathcal {W}_F) \ar@{^{(}->}[d]
   \ar@<0.5ex>[l]^{\Hom_R(C,-)} \\
  \mathcal {G}\textrm{-}\textrm{flat}_{\leqslant n}\cap\mathcal
{A}_C(R)\ar@{^{(}->}[d] \ar@<0.5ex>[r]^{C\otimes_R-} & \mathcal
{G}_C\textrm{-}\textrm{flat}_{\leqslant n} \ar@{^{(}->}[d] \ar@<0.5ex>[l]^{\Hom_R(C,-)}\\
  \mathcal
{A}_C(R)\ar@<0.5ex>[r]^{C\otimes_R-} &\mathcal {B}_C(R)
\ar@<0.5ex>[l]^{\Hom_R(C,-)}   }$
\end{center}
\end{theorem}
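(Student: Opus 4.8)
The plan is to show that each of the four rows is an equivalence of categories; since every horizontal arrow is a restriction of the single adjoint pair $(C\otimes_R-,\ \Hom_R(C,-))$ and every vertical arrow is an inclusion of a full subcategory, the commutativity of the whole diagram then holds automatically. Two rows are already in hand: the second row is precisely Theorem 3.5, and the bottom row is the classical Foxby equivalence recalled in the Introduction, a consequence of the results of [12] packaged in Lemmas 2.13 and 2.14. For the top row, $C\otimes_R-$ sends a flat module $F$ to $C\otimes_R F\in\mathcal{F}_C(R)$ by definition; and since every flat module has flat dimension $0$ one has $\mathcal{F}(R)\subseteq\mathcal{A}_C(R)$, while every $C$-flat module has $\mathcal{F}_C$-projective dimension $0$ so that $\mathcal{F}_C(R)\subseteq\mathcal{B}_C(R)$, both by Lemma 2.13(2). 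On these subclasses the unit $\mu_{CCM}$ and counit $\nu_{CCN}$ are already isomorphisms, so $\Hom_R(C,C\otimes_R F)\cong F$ is flat and the top row is simply the restriction of the bottom one.

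So the real work is the third row. The two facts I would use are: (a) $C\otimes_R-$ is exact on any short exact sequence whose terms all lie in $\mathcal{A}_C(R)$ and $\Hom_R(C,-)$ is exact on any short exact sequence whose terms all lie in $\mathcal{B}_C(R)$, because $\Tor^R_{\geqslant1}(C,-)$ vanishes on the first class and $\Ext_R^{\geqslant1}(C,-)$ on the second; and (b) Lemma 2.13(1), which lets one propagate membership in $\mathcal{A}_C(R)$, respectively $\mathcal{B}_C(R)$, along every syzygy of a bounded resolution. Given $M\in\mathcal{A}_C(R)$ of Gorenstein flat dimension $\leqslant n$, a standard property of Gorenstein flat dimension (see, e.g., [14]) provides an exact sequence $0\to G\to F_{n-1}\to\cdots\to F_0\to M\to 0$ with $G$ Gorenstein flat and each $F_i$ flat; by (b) all syzygies, in particular $G$, lie in $\mathcal{A}_C(R)$, so $G\in\mathcal{G}(\mathcal{F})\cap\mathcal{A}_C(R)$. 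Applying the exact functor $C\otimes_R-$ yields an exact sequence $0\to C\otimes_R G\to C\otimes_R F_{n-1}\to\cdots\to C\otimes_R F_0\to C\otimes_R M\to 0$ in which $C\otimes_R G\in\mathcal{G}(\mathcal{W}_F)$ by Theorem 3.5 and $C\otimes_R F_i\in\mathcal{F}_C(R)\subseteq\mathcal{G}(\mathcal{W}_F)$; hence $C\otimes_R M$ has $\mathcal{G}(\mathcal{W}_F)$-projective dimension $\leqslant n$. Conversely, if $N$ has $\mathcal{G}(\mathcal{W}_F)$-projective dimension $\leqslant n$, fix a left $\mathcal{G}(\mathcal{W}_F)$-resolution $0\to W_n\to\cdots\to W_0\to N\to 0$; each $W_i\in\mathcal{G}(\mathcal{W}_F)\subseteq\mathcal{B}_C(R)$ by Theorem 3.4, so every syzygy, and $N$ itself, lies in $\mathcal{B}_C(R)$ by (b). Applying the exact functor $\Hom_R(C,-)$ and using Theorem 3.5 produces an exact sequence $0\to\Hom_R(C,W_n)\to\cdots\to\Hom_R(C,W_0)\to\Hom_R(C,N)\to 0$ with all $\Hom_R(C,W_i)\in\mathcal{G}(\mathcal{F})\cap\mathcal{A}_C(R)$, whence $\Hom_R(C,N)\in\mathcal{G}\textrm{-flat}_{\leqslant n}\cap\mathcal{A}_C(R)$. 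Since $\mu$ and $\nu$ restrict to isomorphisms on $\mathcal{A}_C(R)$ and $\mathcal{B}_C(R)$, the third row is an equivalence, and the proof is complete.

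I expect the one subtle point to be the use of (b): one must verify that every syzygy of the resolutions in play --- not merely the two visible endpoints --- lies in the Auslander, respectively Bass, class, so that the relevant functor remains exact all the way up the resolution. This is exactly where the two-out-of-three property of Lemma 2.13(1), combined with the base memberships $\mathcal{G}(\mathcal{W}_F)\subseteq\mathcal{B}_C(R)$ from Theorem 3.4 and $\mathcal{F}(R)\subseteq\mathcal{A}_C(R)$, does the work. Everything else is formal bootstrapping from Theorem 3.5 and the classical Foxby equivalence.
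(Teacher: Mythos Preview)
Your proof is correct and follows essentially the same strategy as the paper's: reduce to the third row, use Theorem~3.4 to place the $\mathcal{G}(\mathcal{W}_F)$-modules (and hence all syzygies and $N$ itself) in $\mathcal{B}_C(R)$, apply $\Hom_R(C,-)$ exactly and invoke Theorem~3.5, and run the symmetric argument with $C\otimes_R-$ on the $\mathcal{A}_C(R)$ side. The only cosmetic differences are that the paper reduces to the case $n=1$ while you treat general $n$ directly, and that in the forward direction you take a resolution of the shape $0\to G\to F_{n-1}\to\cdots\to F_0\to M\to 0$ with the $F_i$ flat (so membership in $\mathcal{A}_C(R)$ is immediate) whereas the paper simply asserts a length-one resolution by modules in $\mathcal{G}(\mathcal{F})\cap\mathcal{A}_C(R)$; both variants rely on the same two-out-of-three reasoning via Lemma~2.13(1).
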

\begin{proof}
Let $M$ be an $R$-module. It suffices to prove the equivalence of
categories of the third rows in the above diagram.

For the third row, it suffices to prove the case for $n=1$. Assume
that $M\in \mathcal {G}_C\textrm{-}\textrm{flat}_{\leqslant 1}$.
Then there exists a short exact sequence
\begin{center}
$
  0 \longrightarrow G_1 \longrightarrow G_0\longrightarrow M \longrightarrow 0
$
\end{center}
in $R$-Mod with $G_0,G_1 \in \mathcal {G}(\mathcal
{W}_F)$. Since $G_1\in \mathcal {B}_C(R)$ by Theorem 3.4, we have
the following exact sequence in $R$-Mod:
\begin{center}
$
  0\longrightarrow \Hom_R(C,G_1) \longrightarrow \Hom_R(C,G_0)\longrightarrow\textrm{ Hom}_R(C,M)\longrightarrow
  0
$
\end{center}
with
$\Hom_R(C,G_0),\Hom_R(C,G_1)\in\mathcal {G}(\mathcal
{F})\cap\mathcal {A}_C(R)$ by Theorem 3.5. Hence, by Lemma 2.13,
$\textrm{ Hom}_R(C,M)\in \mathcal
{G}\textrm{-}\textrm{flat}_{\leqslant 1}\cap\mathcal {A}_C(R)$.

Conversely, assume that $M \in  \mathcal
{G}\textrm{-}\textrm{flat}_{\leqslant 1}\cap\mathcal {A}_C(R) $.
Then there exists a short exact sequence $0\rightarrow
G_1\rightarrow G_0\rightarrow M\rightarrow0$ in $R$-Mod with
$G_0,G_1\in\mathcal {G}(\mathcal {F})\cap\mathcal {A}_C(R) $. Since
$M\in\mathcal {A}_C(R)$ by Lemma 2.13, $\Tor_R^{\geqslant1}(C,M)=0$.
Thus, there exists a short exact sequence
\begin{center}
$0\longrightarrow C\otimes_RG_1\longrightarrow C\otimes_RG_0\longrightarrow
C\otimes_RM\longrightarrow0 $
\end{center}
in $R$-Mod. By Theorem 3.5,
we know that $C\otimes_RG_0,C\otimes_RG_1\in \mathcal {G}(\mathcal
{W}_F)$. Hence, $C\otimes_RM \in \mathcal
{G}_C\textrm{-}\textrm{flat}_{\leqslant 1}$.
\end{proof}

\section { \bf Stability of Categories}

We start with the following definition.

\begin{definition}
Let $M$ be an $R$-module and $n\geqslant2$ a integer. We say that
$M\in \mathcal {G}^n(\mathcal {W}_F)$ if there exists an exact
sequence
\begin{center}
$\mathbb{G}_\bullet=  \cdots\longrightarrow
G_1\longrightarrow G_0\longrightarrow G^0\longrightarrow
G^1\longrightarrow\cdots$
\end{center}
in $\mathcal{G}^{n-1}(\mathcal {W}_F)$ such that $M \cong$
$\im(G_0\rightarrow G^0) $ and that $\mathbb{G}_\bullet$ is
$\Hom_R(\mathcal {G}^{n-1}(\mathcal {W}_F),-)$ and $\mathcal
{G}^{n-1}(\mathcal {W}_F)^+\otimes_R-$ exact.

Set $\mathcal {G}^0(\mathcal {W}_F)=\mathcal {F}_C(R),~\mathcal
{G}^1(\mathcal {W}_F)= \mathcal {G}(\mathcal {W}_F)$. One can easily
check that there is a contain $\mathcal {G}^n(\mathcal
{W}_F)\subseteq\mathcal {G}^{n+1}(\mathcal {W}_F)$ for all
$n\geqslant0$.

Similarly, we can also define modules which belong to $\mathcal
{G}^n(\mathcal {G}\mathcal {F}_C(R)\cap \mathcal {B}_C(R))$ or
$\mathcal {G}^n(\mathcal {F})$ for $n\geqslant2$.
\end{definition}

\begin{lemma}
$\mathcal {P}_C(R)~\bot~\mathcal {G}^2(\mathcal {W}_F)$ and
$\mathcal {I}_C(R)~\top~\mathcal {G}^2(\mathcal {W}_F)$.
\end{lemma}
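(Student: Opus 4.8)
The plan is to run, with the enlarged ambient class $\mathcal{X}=\mathcal{G}(\mathcal{W}_F)$, exactly the argument behind Proposition 3.2: apply Lemma 2.4 to the doubly infinite exact sequence $\mathbb{G}_\bullet=\cdots\to G_1\to G_0\to G^0\to G^1\to\cdots$ in $\mathcal{G}(\mathcal{W}_F)$ witnessing a given $M\in\mathcal{G}^2(\mathcal{W}_F)$, where $M\cong\im(G_0\to G^0)$ and $\mathbb{G}_\bullet$ is $\Hom_R(\mathcal{G}(\mathcal{W}_F),-)$ and $\mathcal{G}(\mathcal{W}_F)^+\otimes_R-$ exact. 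For the Ext statement, note first that every $C$-flat module is $\mathcal{W}_F$-Gorenstein, so $\mathcal{P}_C(R)\subseteq\mathcal{F}_C(R)\subseteq\mathcal{G}(\mathcal{W}_F)$; hence $\mathbb{G}_\bullet$ is in particular $\Hom_R(\mathcal{P}_C(R),-)$ exact. By Proposition 3.2, $\mathcal{P}_C(R)~\bot~\mathcal{G}(\mathcal{W}_F)$, so $\mathcal{P}_C(R)~\bot~G_i$ and $\mathcal{P}_C(R)~\bot~G^i$ for all $i$; Lemma 2.4(1) (with $\mathcal{X}=\mathcal{G}(\mathcal{W}_F)$ and each $W\in\mathcal{P}_C(R)$ in the role of $M$) then gives $\Ext^{\geqslant1}_R(W,\im(\delta^{\mathbb{G}_\bullet}_i))=0$ for every $i$. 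Taking the image isomorphic to $M$ yields $\mathcal{P}_C(R)~\bot~\mathcal{G}^2(\mathcal{W}_F)$.

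The Tor statement needs one extra observation, since the definition of $\mathcal{G}^2(\mathcal{W}_F)$ only provides $\mathcal{G}(\mathcal{W}_F)^+\otimes_R-$ exactness of $\mathbb{G}_\bullet$, whereas to invoke Lemma 2.4(2) I want $\mathcal{I}_C(R)\otimes_R-$ exactness. So I would show that every $U=\Hom_R(C,I)\in\mathcal{I}_C(R)$, $I$ injective, is a direct summand of a module in $\mathcal{G}(\mathcal{W}_F)^+$. Indeed, it is well known that the character module of an injective (more generally, FP-injective) module is flat, so $I^+$ is flat and $C\otimes_R I^+\in\mathcal{F}_C(R)\subseteq\mathcal{G}(\mathcal{W}_F)$; by Hom-tensor adjunction $(C\otimes_R I^+)^+\cong\Hom_R(C,I^{++})$. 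The evaluation map $I\to I^{++}$ is a monomorphism which splits because $I$ is injective, so applying $\Hom_R(C,-)$ exhibits $U=\Hom_R(C,I)$ as a direct summand of $\Hom_R(C,I^{++})\cong(C\otimes_R I^+)^+\in\mathcal{G}(\mathcal{W}_F)^+$. Since $\mathbb{G}_\bullet$ is $\mathcal{G}(\mathcal{W}_F)^+\otimes_R-$ exact and exactness of $-\otimes_R\mathbb{G}_\bullet$ is inherited by direct summands, $U\otimes_R\mathbb{G}_\bullet$ is exact for every $U\in\mathcal{I}_C(R)$; that is, $\mathbb{G}_\bullet$ is $\mathcal{I}_C(R)\otimes_R-$ exact.

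Now, again by Proposition 3.2, $\mathcal{I}_C(R)~\top~\mathcal{G}(\mathcal{W}_F)$, so $\mathcal{I}_C(R)~\top~G_i$ and $\mathcal{I}_C(R)~\top~G^i$ for all $i$; Lemma 2.4(2) (with $\mathcal{X}=\mathcal{G}(\mathcal{W}_F)$ and each $U\in\mathcal{I}_C(R)$ in the role of $M$) gives $\Tor^R_{\geqslant1}(U,\im(\delta^{\mathbb{G}_\bullet}_i))=0$ for every $i$, and in particular $\mathcal{I}_C(R)~\top~M$. I expect the middle paragraph to be the only real obstacle: the first and third paragraphs are essentially verbatim the proof of Proposition 3.2 with $\mathcal{G}(\mathcal{W}_F)$ in place of $\mathcal{F}_C(R)$, but the passage from $\mathcal{G}(\mathcal{W}_F)^+\otimes_R-$ exactness to $\mathcal{I}_C(R)\otimes_R-$ exactness is genuinely new, and it is there that the flatness of $I^+$ and the splitting of $I\hookrightarrow I^{++}$ are needed.
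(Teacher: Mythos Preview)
Your proof is correct and follows the same skeleton as the paper's: invoke Lemma 2.4 on the defining complex $\mathbb{G}_\bullet$, using Proposition 3.2 for the vanishing on the terms and the containments $\mathcal{P}_C(R)\subseteq\mathcal{G}(\mathcal{W}_F)$, $\mathcal{I}_C(R)\subseteq\mathcal{G}(\mathcal{W}_F)^+$ for the exactness hypotheses. The only difference is that where the paper simply asserts $\mathcal{I}_C(R)\subseteq\mathcal{G}(\mathcal{W}_F)^+$, you supply the honest argument (via $I\hookrightarrow I^{++}$ splitting and $(C\otimes_R I^+)^+\cong\Hom_R(C,I^{++})$) that each $C$-injective is a direct \emph{summand} of a module in $\mathcal{G}(\mathcal{W}_F)^+$, which is all that is needed and is arguably what the paper's one-line proof tacitly relies on.
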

\begin{proof}
It follows directly from Lemma 2.4, Proposition 3.2 and the fact
that $\mathcal {P}_C(R)\subseteq\mathcal {G}(\mathcal {W}_F)$ and
$\mathcal {I}_C(R)\subseteq \mathcal {G}(\mathcal {W}_F)^+$.
\end{proof}

\begin{lemma}
Let $R$ be a GF-closed ring. Then $\mathcal {P}_C(R)$ is a
projective generator for $\mathcal {G}(\mathcal {W}_F)$.
\end{lemma}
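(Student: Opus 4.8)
The plan is to verify the two defining conditions of a projective generator (Definition 2.1): that $\mathcal{P}_C(R) \subseteq \mathcal{G}(\mathcal{W}_F)$ with $\mathcal{P}_C(R)\,\bot\,\mathcal{G}(\mathcal{W}_F)$, and that every object of $\mathcal{G}(\mathcal{W}_F)$ fits into a short exact sequence with a $\mathcal{P}_C(R)$-object in the middle term and the two outer terms in $\mathcal{G}(\mathcal{W}_F)$.

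First I would dispatch the easy half. Since every $C$-projective module is $C$-flat, we have $\mathcal{P}_C(R) \subseteq \mathcal{F}_C(R) \subseteq \mathcal{G}(\mathcal{W}_F)$ by the remark following Definition 3.1. The orthogonality $\mathcal{P}_C(R)\,\bot\,\mathcal{G}(\mathcal{W}_F)$ is exactly Proposition 3.2. So the substantive content is the generating condition: given $M \in \mathcal{G}(\mathcal{W}_F)$, I must produce a short exact sequence
\begin{center}
$0 \longrightarrow M' \longrightarrow V \longrightarrow M \longrightarrow 0$
\end{center}
with $V \in \mathcal{P}_C(R)$ and $M', V \in \mathcal{G}(\mathcal{W}_F)$ (note $V \in \mathcal{G}(\mathcal{W}_F)$ is automatic once $V \in \mathcal{P}_C(R)$).

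To build this, I would use Theorem 3.4: $M \in \mathcal{G}(\mathcal{W}_F) = \mathcal{G}\mathcal{F}_C(R) \cap \mathcal{B}_C(R)$. Since $M \in \mathcal{B}_C(R)$, one has $\Tor^R_{\geqslant 1}(C, \Hom_R(C,M)) = 0$ and $M \cong C \otimes_R \Hom_R(C,M)$ via the evaluation map. Applying Theorem 3.5, $\Hom_R(C,M) \in \mathcal{G}(\mathcal{F}) \cap \mathcal{A}_C(R)$; in particular it is Gorenstein flat, so it admits a surjection $P \twoheadrightarrow \Hom_R(C,M)$ from a projective (indeed flat) module $P$ whose kernel $L$ is again Gorenstein flat (the class $\mathcal{G}(\mathcal{F})$ is closed under kernels of epimorphisms onto Gorenstein flats over a GF-closed ring; alternatively take $L$ from a complete flat resolution of $\Hom_R(C,M)$, which is the cleaner choice since then $L \in \mathcal{G}(\mathcal{F})$ immediately and, being a cokernel in that resolution, also lies in $\mathcal{A}_C(R)$ by Lemma 2.13 applied to the flat—hence $\mathcal{A}_C(R)$—terms). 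Then $0 \to L \to P \to \Hom_R(C,M) \to 0$ is a short exact sequence in $\mathcal{G}(\mathcal{F}) \cap \mathcal{A}_C(R)$. Now apply $C \otimes_R -$: since $\Hom_R(C,M) \in \mathcal{A}_C(R)$ gives $\Tor^R_1(C, \Hom_R(C,M)) = 0$, the sequence
\begin{center}
$0 \longrightarrow C \otimes_R L \longrightarrow C \otimes_R P \longrightarrow C \otimes_R \Hom_R(C,M) \longrightarrow 0$
\end{center}
is exact; it identifies with $0 \to C\otimes_R L \to C\otimes_R P \to M \to 0$. Here $C \otimes_R P \in \mathcal{P}_C(R)$, and by Theorem 3.5 both $C \otimes_R L$ and $C \otimes_R P$ lie in $\mathcal{G}(\mathcal{W}_F)$. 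This is precisely the required short exact sequence, so $\mathcal{P}_C(R)$ is a generator, and combined with Proposition 3.2 it is a projective generator for $\mathcal{G}(\mathcal{W}_F)$.

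The main obstacle is making sure the kernel term stays inside $\mathcal{A}_C(R)$ (so that $C \otimes_R -$ is exact on the sequence and lands in the right class); this is why I would take $L$ to be a syzygy coming from a complete flat resolution of $\Hom_R(C,M)$ rather than an arbitrary flat cover, since Lemma 2.13(2) guarantees that flat modules lie in $\mathcal{A}_C(R)$ and that $\mathcal{A}_C(R)$ is closed under the relevant two-out-of-three, forcing $L \in \mathcal{A}_C(R)$. Everything else — exactness of $C \otimes_R -$, membership of $C \otimes_R P$ in $\mathcal{P}_C(R)$, and the invocation of Theorem 3.5 — is routine. The GF-closed hypothesis is used precisely to have Theorem 3.5 available in the form needed and to know $\mathcal{G}(\mathcal{F})$ behaves well under kernels of epimorphisms, which is where Corollary 3.6 (or its flat analogue from [2]) enters should one prefer the flat-cover route.
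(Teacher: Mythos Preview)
Your proof is correct, but the paper takes a shorter path. Having noted $M\in\mathcal{B}_C(R)$ via Theorem~3.4, the paper simply observes that this yields an epimorphism $C\otimes_R P\twoheadrightarrow M$ with $P$ projective, and then invokes Corollary~3.6 (closure of $\mathcal{G}(\mathcal{W}_F)$ under kernels of epimorphisms when $R$ is GF-closed) to conclude that the kernel $M'$ lies in $\mathcal{G}(\mathcal{W}_F)$. You instead pass through the Foxby equivalence of Theorem~3.5: work on the $\mathcal{A}_C(R)$ side with $\Hom_R(C,M)\in\mathcal{G}(\mathcal{F})\cap\mathcal{A}_C(R)$, build the short exact sequence $0\to L\to P\to\Hom_R(C,M)\to0$ there, verify $L\in\mathcal{G}(\mathcal{F})\cap\mathcal{A}_C(R)$, and push back with $C\otimes_R-$. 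Your route makes the role of ordinary Gorenstein flat modules explicit and uses the closure result for $\mathcal{G}(\mathcal{F})$ from [2] directly rather than its $\mathcal{G}(\mathcal{W}_F)$ repackaging in Corollary~3.6; the paper's route is shorter precisely because Corollary~3.6 has already absorbed that work. One small caveat: your parenthetical alternative of reading $L$ off a complete \emph{flat} resolution would only give a flat $P$, hence $C\otimes_R P\in\mathcal{F}_C(R)$ rather than $\mathcal{P}_C(R)$, which does not suffice for the generator condition as stated; so keep $P$ projective, get $L\in\mathcal{G}(\mathcal{F})$ from GF-closedness, and $L\in\mathcal{A}_C(R)$ from Lemma~2.13.
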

\begin{proof}
Let $M$ be an $R$-module and $M\in\mathcal {G}(\mathcal {W}_F)$. So
$M\in \mathcal {B}_C(R)$ by Theorem 3.4. Thus, we have a short exact
sequence
\begin{center}
$0\longrightarrow M'\longrightarrow C\otimes_RP\longrightarrow M\longrightarrow0$
\end{center}
in $R$-Mod with $P$ projective. By Corollary 3.6, we know that
$M'\in \mathcal {G}(\mathcal {W}_F)$. On the other hand, it follows
from Proposition 3.2 that $\mathcal {P}_C(R)~\bot~\mathcal
{G}(\mathcal {W}_F)$. Hence, $\mathcal {P}_C(R)$ is a projective
generator for $\mathcal {G}(\mathcal {W}_F)$.
\end{proof}

\begin{lemma}
Let $R$ be a GF-closed ring and let $M$ be an $R$-module which
belongs to $\mathcal {G}^2(\mathcal {W}_F)$. Then $M$ admits a
proper left $\mathcal {P}_C(R)$-resolution.
\end{lemma}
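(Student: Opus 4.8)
The goal is to produce a proper left $\mathcal{P}_C(R)$-resolution of $M$, i.e. an exact sequence $\cdots\to V_1\to V_0\to M\to 0$ with each $V_i\in\mathcal{P}_C(R)$ which remains exact after applying $\Hom_R(\mathcal{P}_C(R),-)$. The natural strategy is to build the resolution one step at a time, each time splitting off a $C$-projective module and showing that the remaining kernel is again in $\mathcal{G}^2(\mathcal{W}_F)$ (so that the construction can be iterated). Thus the crux is the following one-step claim: if $M\in\mathcal{G}^2(\mathcal{W}_F)$, then there is a short exact sequence $0\to M'\to V\to M\to 0$ with $V\in\mathcal{P}_C(R)$, $M'\in\mathcal{G}^2(\mathcal{W}_F)$, and $\Hom_R(\mathcal{P}_C(R),-)$ leaves it exact. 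Granting this, one iterates to obtain the full resolution, and properness of the whole complex follows from properness at each stage by the usual dimension-shifting argument, using that $\mathcal{P}_C(R)\,\bot\,\mathcal{G}^2(\mathcal{W}_F)$ from Lemma 4.2.

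First I would record that $M\in\mathcal{G}^2(\mathcal{W}_F)$ forces $M\in\mathcal{B}_C(R)$: indeed every module in $\mathcal{G}(\mathcal{W}_F)$ lies in $\mathcal{B}_C(R)$ by Theorem 3.4, the defining complex $\mathbb{G}_\bullet$ of $M$ has entries in $\mathcal{G}(\mathcal{W}_F)\subseteq\mathcal{B}_C(R)$, and $\mathcal{B}_C(R)$ is closed under the relevant kernels/cokernels by Lemma 2.13, so $M$, being such an image, is in $\mathcal{B}_C(R)$. Since $M\in\mathcal{B}_C(R)$, the evaluation map $\nu_{CCM}\colon C\otimes_R\Hom_R(C,M)\to M$ is an isomorphism; picking a projective cover-type surjection $P\twoheadrightarrow \Hom_R(C,M)$ from a free module $P$ and tensoring with $C$ gives an epimorphism $C\otimes_R P\twoheadrightarrow M$ with $C\otimes_R P\in\mathcal{P}_C(R)$. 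Let $M'$ be the kernel. Because $\mathcal{P}_C(R)\subseteq\mathcal{G}(\mathcal{W}_F)\subseteq\mathcal{G}^2(\mathcal{W}_F)$ and $\mathcal{P}_C(R)\,\bot\,\mathcal{G}^2(\mathcal{W}_F)$ (Lemma 4.2), the sequence $0\to M'\to C\otimes_R P\to M\to 0$ is automatically $\Hom_R(\mathcal{P}_C(R),-)$-exact; this handles properness of the single step.

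The remaining — and main — obstacle is to show $M'\in\mathcal{G}^2(\mathcal{W}_F)$, i.e. that $\mathcal{G}^2(\mathcal{W}_F)$ is closed under kernels of epimorphisms from $\mathcal{P}_C(R)$ (equivalently, that $\mathcal{G}^2(\mathcal{W}_F)$ is closed under kernels of epimorphisms whose target is in $\mathcal{G}^2(\mathcal{W}_F)$). I expect this to run parallel to the proof that $\mathcal{G}(\mathcal{W}_F)$ is closed under kernels of epimorphisms (Corollary 3.6, whose argument mimics [2, Theorem 2.3 and Corollary 2.6]), but now carried out one level up: the key input is that $R$ is GF-closed and that $\mathcal{P}_C(R)$ is a projective generator for $\mathcal{G}(\mathcal{W}_F)$ (Lemma 4.3), which plays at level $2$ the structural role that $\mathcal{F}_C(R)$ plays at level $1$. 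Concretely, from the defining complex of $M$ in $\mathcal{G}(\mathcal{W}_F)$ one extracts, via Lemma 4.3, $\mathcal{P}_C(R)$-resolutions of the syzygies, assembles a mapping-cone/horseshoe diagram over the sequence $0\to M'\to V\to M\to 0$, and checks that the cone is a complex in $\mathcal{G}(\mathcal{W}_F)$ that is $\Hom_R(\mathcal{G}(\mathcal{W}_F),-)$- and $\mathcal{G}(\mathcal{W}_F)^+\otimes_R-$-exact, which is exactly what certifies $M'\in\mathcal{G}^2(\mathcal{W}_F)$. The exactness of the cone under $\mathcal{G}(\mathcal{W}_F)^+\otimes_R-$ is the delicate point and is where the GF-closed hypothesis and Lemma 2.4(2) (relating $\Tor$-vanishing on syzygies to $M\otimes_R-$-exactness) are used; modulo that, splicing the resulting one-step sequences yields the desired proper left $\mathcal{P}_C(R)$-resolution of $M$.
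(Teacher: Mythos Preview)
Your opening observation---that $M\in\mathcal{G}^2(\mathcal{W}_F)$ forces $M\in\mathcal{B}_C(R)$ via Theorem~3.4 and Lemma~2.13---is correct and in fact already finishes the job. Any module in $\mathcal{B}_C(R)$ admits a proper left $\mathcal{P}_C(R)$-resolution: the surjection $C\otimes_R P\twoheadrightarrow M$ you construct has kernel $M'\in\mathcal{B}_C(R)$ (Lemma~2.13), and Lemma~2.14 gives $\mathcal{P}_C(R)\,\bot\,\mathcal{B}_C(R)$, so the short exact sequence is $\Hom_R(\mathcal{P}_C(R),-)$-exact and the iteration proceeds with $M'$ in place of $M$. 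This is precisely the fact invoked without comment in the proof of Theorem~3.4 (``it is easy to verify that $M$ has a proper left $\mathcal{P}_C(R)$-resolution''). You never need $M'\in\mathcal{G}^2(\mathcal{W}_F)$; membership in $\mathcal{B}_C(R)$ is enough.

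The paper's own argument takes a different and even shorter route: since $M\in\mathcal{G}^2(\mathcal{W}_F)$, the defining complex gives a proper left $\mathcal{G}(\mathcal{W}_F)$-resolution of $M$; Lemma~4.3 says $\mathcal{P}_C(R)$ is a projective generator for $\mathcal{G}(\mathcal{W}_F)$; and then [15, Lemma~2.2(b)] converts the proper $\mathcal{G}(\mathcal{W}_F)$-resolution into a proper $\mathcal{P}_C(R)$-resolution, with Lemma~4.2 supplying the required $\Ext$-vanishing. So the paper outsources the iterative construction to a general lemma, whereas your $\mathcal{B}_C(R)$ argument does it by hand but equally cleanly.

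Where your proposal goes astray is the ``main obstacle'' paragraph. Attempting to show that $\mathcal{G}^2(\mathcal{W}_F)$ is closed under kernels of epimorphisms is both unnecessary (as noted above) and, as you sketch it, incomplete: the horseshoe/mapping-cone construction you describe would give a complex for $M'$ that is $\Hom_R(\mathcal{P}_C(R),-)$-exact, but you need $\Hom_R(\mathcal{G}(\mathcal{W}_F),-)$-exactness and $\mathcal{G}(\mathcal{W}_F)^+\otimes_R-$-exactness, and nothing in your listed inputs delivers that. In the paper this closure only becomes available \emph{after} Theorem~4.5, which itself relies on Lemma~4.4. Drop the detour and your first two paragraphs already constitute a valid, self-contained proof.
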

\begin{proof}
It follows directly from the definition of modules which belong to
$\mathcal {G}^2(\mathcal {W}_F)$, Lemma 4.2, Lemma 4.3 and [15,
Lemma 2.2(b)].
\end{proof}

\begin{theorem}
Let $R$ be a GF-closed ring. We have $\mathcal {G}^n(\mathcal
{W}_F)=\mathcal {G}(\mathcal {W}_F)$ for all $n\geqslant1$.
\end{theorem}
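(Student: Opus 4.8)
The plan is to prove the statement by induction on $n$, the base case $n=1$ being trivial and the case $n=2$ carrying all the real content; once $\mathcal {G}^2(\mathcal {W}_F)=\mathcal {G}(\mathcal {W}_F)$ is established, the inductive step $\mathcal {G}^{n+1}(\mathcal {W}_F)=\mathcal {G}^n(\mathcal {W}_F)$ follows formally by replacing $\mathcal {G}(\mathcal {W}_F)$ with $\mathcal {G}^n(\mathcal {W}_F)$ in the argument, since by the induction hypothesis the two classes agree and hence the resolving/exactness conditions defining $\mathcal {G}^{n+1}$ are literally those defining $\mathcal {G}^2$. Since the containment $\mathcal {G}(\mathcal {W}_F)\subseteq\mathcal {G}^2(\mathcal {W}_F)$ is already noted in Definition 4.1, the work is to prove $\mathcal {G}^2(\mathcal {W}_F)\subseteq\mathcal {G}(\mathcal {W}_F)$.

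So let $M\in\mathcal {G}^2(\mathcal {W}_F)$, witnessed by an exact sequence $\mathbb{G}_\bullet = \cdots\to G_1\to G_0\to G^0\to G^1\to\cdots$ in $\mathcal {G}(\mathcal {W}_F)$ with $M\cong\im(G_0\to G^0)$, which is $\Hom_R(\mathcal {G}(\mathcal {W}_F),-)$-exact and $\mathcal {G}(\mathcal {W}_F)^+\otimes_R-$-exact. The strategy is to produce the complete $\mathcal {F}_C$-resolution required by Definition 3.1, i.e. an exact complex in $\mathcal {F}_C(R)$ that is both $\Hom_R(\mathcal {P}_C(R),-)$- and $\mathcal {I}_C(R)\otimes_R-$-exact, with $M$ as the relevant image. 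First I would observe that by Lemma 4.2 we have $\mathcal {I}_C(R)\top M$, so $M\in\mathcal {G}\mathcal {F}_C(R)$ will follow once a suitable right $\mathcal {F}_C(R)$-resolution of $M$ is produced; and since $\mathcal {I}_C(R)\subseteq\mathcal {G}(\mathcal {W}_F)^+$, every cokernel of $\mathbb{G}_\bullet$ (in particular $M$) has no $\Tor$ against $\mathcal {I}_C(R)$. Then, using Lemma 4.3 (that $\mathcal {P}_C(R)$ is a projective generator for $\mathcal {G}(\mathcal {W}_F)$) together with Lemma 4.4 (that $M$ admits a proper left $\mathcal {P}_C(R)$-resolution), I would resolve each $G_i$ and each $G^i$ by $C$-projectives and splice horseshoe-style to replace the $\mathcal {G}(\mathcal {W}_F)$-complex $\mathbb{G}_\bullet$ by a complex in $\mathcal {P}_C(R)\subseteq\mathcal {F}_C(R)$; the properness from Lemma 4.3 and the $\Hom_R(\mathcal {G}(\mathcal {W}_F),-)$-exactness of $\mathbb{G}_\bullet$ guarantee the resulting complex stays $\Hom_R(\mathcal {P}_C(R),-)$-exact, and a dual/tensor bookkeeping with $\mathcal {I}_C(R)\subseteq\mathcal {G}(\mathcal {W}_F)^+$ keeps it $\mathcal {I}_C(R)\otimes_R-$-exact.

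More efficiently, I would route the argument through Theorem 3.4, which identifies $\mathcal {G}(\mathcal {W}_F)=\mathcal {G}\mathcal {F}_C(R)\cap\mathcal {B}_C(R)$, and through the Foxby equivalence of Theorem 3.5. Applying $\Hom_R(C,-)$ to $\mathbb{G}_\bullet$ (legitimate since each $G_i,G^i\in\mathcal {B}_C(R)$, so the sequence stays exact and lands in $\mathcal {G}(\mathcal {F})\cap\mathcal {A}_C(R)$ termwise) should exhibit $\Hom_R(C,M)$ as a member of $\mathcal {G}^2(\mathcal {F})$ — or, using that $\mathcal {P}$ is a projective generator for $\mathcal {G}(\mathcal {F})$ over a GF-closed ring, directly in $\mathcal {G}(\mathcal {F})$ by the classical stability result of Bouchiba–Khaloui / Xu (the analogue of [2, Theorem 2.3] applied iteratively, essentially the content behind parts (3) of Theorem II). One then checks $\Hom_R(C,M)\in\mathcal {A}_C(R)$ (two-out-of-three along the resolution, Lemma 2.13), so $M\cong C\otimes_R\Hom_R(C,M)\in\mathcal {G}(\mathcal {W}_F)$ by Theorem 3.5. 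The main obstacle I anticipate is the bookkeeping that the $\Hom$- and $\otimes$-exactness conditions survive the translation: specifically, verifying that $\Hom_R(\mathcal {G}(\mathcal {W}_F),-)$-exactness of $\mathbb{G}_\bullet$ transports under $\Hom_R(C,-)$ to $\Hom_R(\mathcal {G}(\mathcal {F}),-)$-exactness of $\Hom_R(C,\mathbb{G}_\bullet)$ — this needs the adjunction $\Hom_R(G',\Hom_R(C,-))\cong\Hom_R(C\otimes_RG',-)$ for $G'\in\mathcal {G}(\mathcal {F})\cap\mathcal {A}_C(R)$ together with $C\otimes_R G'\in\mathcal {G}(\mathcal {W}_F)$, and symmetrically for the character-module tensor condition via $\mathcal {G}(\mathcal {W}_F)^+$ versus $\mathcal {G}(\mathcal {F})^+$. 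Once that compatibility is nailed down, the rest is the standard horseshoe-and-mapping-cone machinery already used in the proof of Theorem 3.9, so I would keep those verifications terse and cite [2] and [14] for the parallel arguments.
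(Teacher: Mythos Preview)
Your reduction to $n=2$ and the overall strategy are sound, but your route is genuinely different from the paper's. The paper does not pass through the Foxby equivalence at all: it builds the complete $\mathcal{F}_C$-resolution of $M$ directly. For the left half it simply invokes Lemma 4.4 to obtain a proper left $\mathcal{P}_C(R)$-resolution $(\alpha)$ of $M$ (not a horseshoe splice over the $G_i$), and checks via Lemma 4.2 that $(\alpha)$ is also $\mathcal{I}_C(R)\otimes_R-$ exact. For the right half it runs an iterated push-out argument: from $0\to M\to G\to M'\to 0$ with $G\in\mathcal{G}(\mathcal{W}_F)$, $M'\in\mathcal{G}^2(\mathcal{W}_F)$, embed $G\hookrightarrow C\otimes_R F^0$, push out to get $0\to M\to C\otimes_R F^0\to K\to 0$, and then show (via two further push-out diagrams, using Corollary 3.6 for closure under extensions) that $K$ again sits in a short exact sequence of the same shape, so the construction recurses. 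No appeal to stability of $\mathcal{G}(\mathcal{F})$ is made; the argument is entirely self-contained.

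Your Foxby-transfer approach is conceptually attractive but has two hazards you should address explicitly. First, you cannot cite Corollary 4.7, since in this paper it is a \emph{consequence} of Theorem 4.5; you correctly gesture at \cite{3} or \cite{17} instead, but then you must make sure the exactness hypotheses you can actually verify on $\Hom_R(C,\mathbb{G}_\bullet)$ match \emph{their} hypotheses, not this paper's Definition 4.1. Second, and relatedly, your adjunction argument only yields $\Hom_R(G',\Hom_R(C,\mathbb{G}_\bullet))$-exactness for $G'\in\mathcal{G}(\mathcal{F})\cap\mathcal{A}_C(R)$, since it needs $C\otimes_R G'\in\mathcal{G}(\mathcal{W}_F)$; this does not give $\Hom_R(\mathcal{G}(\mathcal{F}),-)$-exactness for arbitrary $G'\in\mathcal{G}(\mathcal{F})$, so you have not placed $\Hom_R(C,M)$ in $\mathcal{G}^2(\mathcal{F})$ in the sense of Definition 4.1. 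What you \emph{can} cleanly verify is that $\Hom_R(C,\mathbb{G}_\bullet)$ is an exact complex of Gorenstein flat modules that is $I\otimes_R-$ exact for every injective $I$ (using $I\cong C\otimes_R\Hom_R(C,I)$ and $C\otimes_R\Hom_R(C,\mathbb{G}_\bullet)\cong\mathbb{G}_\bullet$ termwise in $\mathcal{B}_C(R)$, together with Lemma 4.2). That weaker package is what the external stability theorems for Gorenstein flats actually consume, so your plan is salvageable---but as written the ``bookkeeping'' you flag is a real gap, not just bookkeeping. The paper's push-out construction sidesteps all of this by never leaving $\mathcal{G}(\mathcal{W}_F)$.
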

\begin{proof}
It suffices to prove the case for $n=2$. Let $M$ be an $R$-module
and $M\in\mathcal {G}^2(\mathcal {W}_F)$. Following from Lemma 4.4,
we have the following exact sequence in $R$-Mod:
\begin{center}
$(\alpha)= \cdots \longrightarrow C\otimes_R P_1 \longrightarrow
C\otimes_R P_0\longrightarrow M \longrightarrow 0 $
\end{center}
with each $P_i$ projective such that $\Hom_R(\mathcal
{P}_C(R),-)$ leaves it exact. By Lemma 2.4, Lemma 2.14 and Lemma
4.2, we get that $\mathcal {I}_C(R)\otimes_R-$ leaves ($\alpha$)
exact as well.

On the other hand, since $M\in\mathcal {G}^2(\mathcal {W}_F)$, there
exists a short exact sequence $
  0 \rightarrow M \rightarrow G \rightarrow M' \rightarrow 0
$ in $R$-Mod with $G\in\mathcal {G}(\mathcal {W}_F)$ and
$M'\in\mathcal {G}^2(\mathcal {W}_F)$. Since $G\in\mathcal
{G}(\mathcal {W}_F)$, there exists a short exact sequence $
  0 \rightarrow G \rightarrow C\otimes_RF^0 \rightarrow G'\rightarrow 0
$ in $R$-Mod with $F^0$ flat and $G'\in\mathcal {G}(\mathcal
{W}_F)$. Then we have the following push-out diagram:
\begin{center}
$\xymatrix{
     & & & 0\ar[d]_{}  & 0 \ar[d]_{} &  \\
     & 0\ar[r]& M\ar@{=}[d]^{} \ar[r]& G\ar[d]\ar[r] &M'\ar[d]\ar[r]&0 \\
& 0\ar[r]& M \ar[r]& C\otimes_RF^0\ar[d]\ar[r] &K\ar[d]\ar[r]&0 \\
&&&G'\ar[d]_{} \ar@{=}[r]^{} & G' \ar[d]_{} \\
    && & 0& 0  &
      }$
\end{center}
Consider the following short exact sequence coming from the middle
row of the above diagram:
\begin{center}
$(\beta)=~~~
  0 \longrightarrow M \longrightarrow C\otimes_RF^0 \longrightarrow K \longrightarrow0
$
\end{center}
From the third column of the above push-out diagram,
we know that $\mathcal {I}_C(R)~\top~K$ by Proposition 3.2 and Lemma
4.2. Thus, $(\beta)$ is $\Hom_R(\mathcal {P}_C(R),-)$ and $\mathcal
{I}_C(R)\otimes_R-$ exact. If we now can construct a short exact
sequence
\begin{center}
$(\eta)=~~~ 0 \longrightarrow K\longrightarrow C\otimes_RF^1
\longrightarrow K' \longrightarrow 0 $
\end{center}
in $R$-Mod with $F^1$ flat and $K'$ a module with the same property
as $K$ (that is, there exists a short exact sequence
$(\mu)=~0\rightarrow M''\rightarrow K'\rightarrow H''\rightarrow0$
in $R$-Mod with $M''\in\mathcal {G}^2(\mathcal {W}_F) $ and
$H''\in\mathcal {G}(\mathcal {W}_F$)), then the following exact
sequence can be constructed recursively:
\begin{center}
$(\gamma)=~~~
\xymatrix@C=20pt@R=5pt{
  0 \ar[r] &  K \ar[r]^{} &~~~C\otimes_RF^1~~~ \ar[rr]^{} \ar@{.>}[dr]_{}
                &  &  ~~~ C\otimes_RF^2 ~~~\ar[r] & \cdots    \\
          &&      & K'  \ar@{.>}[ur]^{} \ar@{.>}[dr]^{}  \\
          && ~~~~~0~~~~~ \ar@{.>}[ur]^{}    && ~~~~~0~~~~~
          }$
\end{center}
From the middle row of the above push-out diagram and $(\mu)$, we
get $\mathcal {P}_C(R)~\bot~K$ and $\mathcal {I}_C(R)~\top~K'$ by
Proposition 3.2, Lemma 2.14 and Lemma 4.2. Then we have that
$(\eta)$ is $\Hom_R(\mathcal {P}_C(R),-)$ and $\mathcal
{I}_C(R)\otimes_R-$ exact. So is $(\gamma)$. Assembling the sequence
$(\alpha),~ (\beta)~ \textrm{and}~ (\gamma)$, we get the following
exact sequence in $R$-Mod:\\\\
$\indent\indent\indent\indent \cdots \longrightarrow C\otimes_RP_1
\longrightarrow C\otimes_RP_0 \longrightarrow C\otimes_RF^0
\longrightarrow C\otimes_RF^1 \longrightarrow\cdots $
\\\\
such that $M\cong
\im(C\otimes_RP_0 \rightarrow C\otimes_RF^0)$ and that
$\Hom_R(\mathcal {P}_C(R),-)$ and $\mathcal {I}_C(R)\otimes_R-$ leave
it exact. It follows that $M \in\mathcal {G}(\mathcal {W}_F)$.

Indeed, since $M'\in\mathcal {G}^2(\mathcal {W}_F)$, there exists a
short exact sequence $
  0\rightarrow M' \rightarrow H \rightarrow M'' \rightarrow 0
$ in $R$-Mod with $H\in\mathcal {G}(\mathcal {W}_F)$ and
$M''\in\mathcal {G}^2(\mathcal {W}_F) $. Now consider the following
push-out diagram:
\begin{center}
$\xymatrix{
     &&  0\ar[d]_{}  & 0 \ar[d]_{} &  \\
      & 0\ar[r]&M'\ar[r]^{} \ar[d]& K\ar[d] \ar[r]& G'\ar@{=}[d]_{}\ar[r]&0\\
      & 0 \ar[r]^{} & H \ar[d]_{} \ar[r]^{} & H'\ar[d]_{} \ar[r]^{} &G' \ar[r]^{} & 0  \\
     &&    M''  \ar[d]\ar@{=}[r]^{} & M''\ar[d]_{}   \\
    &&  0& 0  &
      }$
\end{center}
From the middle row of the above diagram, we know $H'\in\mathcal
{G}(\mathcal {W}_F)$ by Corollary 3.6. Then there exists a short
exact sequence $ 0\rightarrow H' \rightarrow C\otimes_RF \rightarrow
H'' \rightarrow0$ in $R$-Mod with $F$ flat and $H''\in\mathcal
{G}(\mathcal {W}_F)$. Consider another push-out diagram:
\begin{center}
$\xymatrix{
     & & & 0\ar[d]_{}  & 0 \ar[d]_{} &  \\
      & 0\ar[r]& K\ar@{=}[d]^{} \ar[r]& H'\ar[d]\ar[r] &M''\ar[d]\ar[r]&0 \\
& 0\ar[r]& K \ar[r]& C\otimes_RF \ar[d]\ar[r] &K'\ar[d]\ar[r]&0 \\
&&&H''\ar[d]_{} \ar@{=}[r]^{} & H'' \ar[d]_{} \\
    && & 0& 0  &
      }$
\end{center}
It is trivial that the third column in the above diagram is as
desired. This completes our proof.
\end{proof}

The following corollary is an immediate consequence of Theorem 3.4
and Theorem 4.5.

\begin{corollary}
Let $R$ be a GF-closed ring. We have $\mathcal {G}^n(\mathcal
{G}\mathcal {F}_C(R)\cap \mathcal {B}_C(R))=\mathcal {G}\mathcal
{F}_C(R)\cap \mathcal {B}_C(R)$ for all $n\geqslant1$.
\end{corollary}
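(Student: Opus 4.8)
The plan is to deduce the corollary from Theorem 3.4 together with Theorem 4.5, the point being that the operation $\mathcal{G}^n(-)$ depends only on the underlying class of modules (and hence on its class of character modules), not on the particular way that class is presented.

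First I would settle the base of an induction on $n$. By Definition 4.1 (together with its evident analogue for the class $\mathcal{G}\mathcal{F}_C(R)\cap\mathcal{B}_C(R)$) one has $\mathcal{G}^1(\mathcal{W}_F)=\mathcal{G}(\mathcal{W}_F)$ and $\mathcal{G}^1(\mathcal{G}\mathcal{F}_C(R)\cap\mathcal{B}_C(R))=\mathcal{G}\mathcal{F}_C(R)\cap\mathcal{B}_C(R)$. Now Theorem 3.4 says precisely that $\mathcal{G}(\mathcal{W}_F)=\mathcal{G}\mathcal{F}_C(R)\cap\mathcal{B}_C(R)$, so $\mathcal{G}^1(\mathcal{W}_F)=\mathcal{G}^1(\mathcal{G}\mathcal{F}_C(R)\cap\mathcal{B}_C(R))$ as classes of $R$-modules.

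Next I would run the induction. Assume $n\geqslant2$ and that $\mathcal{G}^{n-1}(\mathcal{W}_F)=\mathcal{G}^{n-1}(\mathcal{G}\mathcal{F}_C(R)\cap\mathcal{B}_C(R))$. Since these are the same class of modules, they have the same class of character modules; hence in the defining condition of $\mathcal{G}^n(-)$ the ambient class in which the complex $\mathbb{G}_\bullet$ is required to live, the functor $\Hom_R(\mathcal{G}^{n-1}(-),-)$ used in the first exactness requirement, and the functor $\mathcal{G}^{n-1}(-)^{+}\otimes_R-$ used in the second are identical in the two situations. Therefore an $R$-module $M$ belongs to $\mathcal{G}^n(\mathcal{W}_F)$ if and only if it belongs to $\mathcal{G}^n(\mathcal{G}\mathcal{F}_C(R)\cap\mathcal{B}_C(R))$, which closes the induction and yields $\mathcal{G}^n(\mathcal{W}_F)=\mathcal{G}^n(\mathcal{G}\mathcal{F}_C(R)\cap\mathcal{B}_C(R))$ for every $n\geqslant1$. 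Chaining this identity with Theorem 4.5 and then once more with Theorem 3.4 gives, for all $n\geqslant1$,
\[
\mathcal{G}^n(\mathcal{G}\mathcal{F}_C(R)\cap\mathcal{B}_C(R))
=\mathcal{G}^n(\mathcal{W}_F)
=\mathcal{G}(\mathcal{W}_F)
=\mathcal{G}\mathcal{F}_C(R)\cap\mathcal{B}_C(R),
\]
which is the assertion. I do not expect a genuine obstacle here: all the substantive content is already packaged in Theorems 3.4 and 4.5, and the only thing requiring care is the routine remark in the inductive step that two classes which coincide as classes of modules produce the same next iterate under the $\mathcal{G}^n(-)$ construction.
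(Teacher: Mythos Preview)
Your proposal is correct and matches the paper's own argument: the paper states that the corollary is an immediate consequence of Theorem~3.4 and Theorem~4.5, and your induction simply spells out why the iteration $\mathcal{G}^n(-)$ is insensitive to replacing $\mathcal{G}(\mathcal{W}_F)$ by the equal class $\mathcal{G}\mathcal{F}_C(R)\cap\mathcal{B}_C(R)$. There is nothing to add.
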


Note that the next result on the class of Gorenstein flat modules is
of [17, Theorem 4.3] or [3, 1.2] when we set $C=R$.

\begin{corollary}
Let $R$ be a GF-closed ring. We have $\mathcal {G}^n(\mathcal
{F})=\mathcal {G}(\mathcal {F})$ for all $n\geqslant1$.
\end{corollary}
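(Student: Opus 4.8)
The plan is to obtain this as the special case $C=R$ of Theorem~4.5. First I would record that $R$ is itself a semidualizing $R$-module: it has the degreewise finite free resolution $0\to R\to R\to 0$, the homothety map $R\to\Hom_R(R,R)$ is the identity, and $\Ext^{\geqslant 1}_R(R,R)=0$. Hence all of the constructions of Sections~2--4 are available for $C=R$, and in that case they degenerate in the expected way: $\mathcal{F}_R(R)=\mathcal{F}(R)$, $\mathcal{P}_R(R)=\mathcal{P}(R)$, $\mathcal{I}_R(R)$ is the class of injective modules, and comparing the definition of Gorenstein flat (Definition~2.5) with that of $G_C$-flat (Definition~2.9) shows $\mathcal{G}\mathcal{F}_R(R)=\mathcal{G}(\mathcal{F})$. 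Moreover Definition~2.12 gives $\mathcal{A}_R(R)=\mathcal{B}_R(R)=R$-Mod, since $R\otimes_R(-)$ and $\Hom_R(R,-)$ are naturally isomorphic to the identity functor. Consequently Theorem~3.4 reads $\mathcal{G}(\mathcal{W}_F)=\mathcal{G}\mathcal{F}_R(R)\cap\mathcal{B}_R(R)=\mathcal{G}(\mathcal{F})$ when $C=R$, and likewise $\mathcal{G}^0(\mathcal{W}_F)=\mathcal{F}_R(R)=\mathcal{F}(R)$.

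Next I would show, by induction on $n$, that $\mathcal{G}^n(\mathcal{W}_F)=\mathcal{G}^n(\mathcal{F})$ for every $n\geqslant 0$ when $C=R$. The cases $n=0$ and $n=1$ are the identifications just made. For the inductive step, assume $\mathcal{G}^{n-1}(\mathcal{W}_F)=\mathcal{G}^{n-1}(\mathcal{F})$; then these classes have the same class of Pontryagin duals, so in Definition~4.1 the requirements that $\mathbb{G}_\bullet$ lie in $\mathcal{G}^{n-1}(\mathcal{W}_F)$ and be $\Hom_R(\mathcal{G}^{n-1}(\mathcal{W}_F),-)$- and $\mathcal{G}^{n-1}(\mathcal{W}_F)^+\otimes_R-$-exact become word for word the conditions defining $\mathcal{G}^n(\mathcal{F})$. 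Hence $\mathcal{G}^n(\mathcal{W}_F)=\mathcal{G}^n(\mathcal{F})$.

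Finally, since $R$ is GF-closed by hypothesis, Theorem~4.5 gives $\mathcal{G}^n(\mathcal{W}_F)=\mathcal{G}(\mathcal{W}_F)$ for all $n\geqslant 1$; feeding in the identifications of the previous two paragraphs yields $\mathcal{G}^n(\mathcal{F})=\mathcal{G}(\mathcal{F})$, as claimed. I do not anticipate a genuine obstacle: the only point requiring care is the bookkeeping in the induction, namely that the exactness conditions defining the iterated classes are unchanged by the substitution $C=R$, and this is immediate because $R\otimes_R(-)$ and $\Hom_R(R,-)$ are naturally the identity functor. (If one preferred not to invoke Theorem~4.5 as a black box, one could instead rerun its push-out/pull-back argument verbatim with $\mathcal{F}(R)$ in place of $\mathcal{F}_C(R)$, using Corollary~3.6 specialized to $C=R$; but the reduction above is shorter and is evidently what the remark preceding the statement intends.)
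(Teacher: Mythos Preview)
Your proposal is correct and follows exactly the approach the paper intends: the paper gives no separate proof of this corollary, only the remark that it is the case $C=R$ of the preceding results (and coincides with \cite{17,3}), and your argument carefully spells out the identifications needed to make that specialization rigorous. The one point worth noting is that $\mathcal{G}^1(\mathcal{W}_F)=\mathcal{G}(\mathcal{W}_F)$ is defined via $\Hom_R(\mathcal{P}_C(R),-)$- and $\mathcal{I}_C(R)\otimes_R-$-exactness rather than by the iterative pattern, so the identification $\mathcal{G}(\mathcal{W}_F)=\mathcal{G}(\mathcal{F})$ at level~$1$ genuinely requires Theorem~3.4 (as you observe) and is not merely a definitional unwinding; once that base case is in hand, your induction for $n\geqslant 2$ is immediate from Definition~4.1.
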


\end{document}